\documentclass[11pt]{preprint}
\usepackage[full]{textcomp}
\usepackage[osf]{newtxtext} 
\usepackage{colortbl}

\usepackage{comment}
\usepackage[usenames,dvipsnames]{xcolor}

\usepackage{amssymb}
\usepackage{lmodern}
\usepackage{mathtools}

\usepackage{hyperref}
\usepackage{breakurl}
\usepackage{aligned-overset}
\usepackage{mhenvs}
\usepackage{mhequ} 
\newcommand{\be}{\begin{equation*}}
\newcommand{\ee}{\end{equation*}}
\usepackage{mhsymb}
\usepackage{booktabs}
\usepackage{tikz}
\usepackage{tcolorbox}
\usepackage{mathrsfs}
\usepackage[utf8]{inputenc}
\usepackage{longtable}
\usepackage{wrapfig}
\usepackage{subcaption}
\usepackage{mathrsfs}
\usepackage{epsfig}
\usepackage{microtype}
\usepackage{comment}
\usepackage{wasysym}
\usepackage{centernot}
\usepackage{enumitem}
\usepackage{bm}
\usepackage{stackrel}
\usepackage{graphicx}
\makeatletter
\newcommand{\globalcolor}[1]{%
  \color{#1}\global\let\default@color\current@color
}
\makeatother

\usetikzlibrary{calc}
\usetikzlibrary{decorations}
\usetikzlibrary{positioning}
\usetikzlibrary{shapes}
\usetikzlibrary{external}

\definecolor{blush}{rgb}{0.87, 0.36, 0.51}
	\definecolor{brightcerulean}{rgb}{0.11, 0.67, 0.84}
	\definecolor{greenryb}{rgb}{0.4, 0.69, 0.2}

\newif\ifdark
\darkfalse

\ifdark
\definecolor{darkred}{rgb}{0.9,0.2,0.2}
\definecolor{darkblue}{rgb}{0.7,0.3,1}
\definecolor{darkgreen}{rgb}{0.1,0.9,0.1}
\definecolor{franck}{rgb}{0,0.8,1}
\definecolor{pagebackground}{rgb}{.15,.21,.18}
\definecolor{pageforeground}{rgb}{.84,.84,.85}
\pagecolor{pagebackground}
\AtBeginDocument{\globalcolor{pageforeground}}
\definecolor{symbols}{rgb}{0,0.7,1}
\colorlet{connection}{red!80!black}
\colorlet{boxcolor}{blue!50}

\else

\definecolor{darkred}{rgb}{0.7,0.1,0.1}
\definecolor{darkblue}{rgb}{0.4,0.1,0.8}
\definecolor{darkgreen}{rgb}{0.1,0.7,0.1}
\definecolor{franck}{rgb}{0,0,1}
\definecolor{pagebackground}{rgb}{1,1,1}
\definecolor{pageforeground}{rgb}{0,0,0}
\colorlet{symbols}{blue!90!black}
\colorlet{connection}{red!30!black}
\colorlet{boxcolor}{blue!50!black}

\fi

\def\slash{\leavevmode\unskip\kern0.18em/\penalty\exhyphenpenalty\kern0.18em}
\def\dash{\leavevmode\unskip\kern0.18em--\penalty\exhyphenpenalty\kern0.18em}

\DeclareMathAlphabet{\mathbbm}{U}{bbm}{m}{n}

\DeclareFontFamily{U}{BOONDOX-calo}{\skewchar\font=45 }
\DeclareFontShape{U}{BOONDOX-calo}{m}{n}{
  <-> s*[1.05] BOONDOX-r-calo}{}
\DeclareFontShape{U}{BOONDOX-calo}{b}{n}{
  <-> s*[1.05] BOONDOX-b-calo}{}
\DeclareMathAlphabet{\mcb}{U}{BOONDOX-calo}{m}{n}
\SetMathAlphabet{\mcb}{bold}{U}{BOONDOX-calo}{b}{n}

\setlist{noitemsep,topsep=4pt,leftmargin=1.5em}

\DeclareMathAlphabet{\mathbbm}{U}{bbm}{m}{n}

\DeclareMathAlphabet{\mcb}{U}{BOONDOX-calo}{m}{n}
\SetMathAlphabet{\mcb}{bold}{U}{BOONDOX-calo}{b}{n}
\DeclareFontFamily{U}{mathx}{\hyphenchar\font45}
\DeclareFontShape{U}{mathx}{m}{n}{
      <5> <6> <7> <8> <9> <10>
      <10.95> <12> <14.4> <17.28> <20.74> <24.88>
      mathx10
      }{}
\DeclareSymbolFont{mathx}{U}{mathx}{m}{n}
\DeclareMathSymbol{\bigtimes}{1}{mathx}{"91}

\def\s{\mathfrak{s}}

\providecommand{\figures}{false}
{ \ifthenelse{\equal{\figures}{false}} {#1}{\[ {\rm Figure \ missing !} \]} }{}
\def\id{\mathrm{id}}

\usepackage{mathtools}

\def\CA{\mathcal{A}}

\def\CQ{\mathcal{Q}}

\def\CT{\mathcal{T}}

\tikzstyle{tinydots}=[dash pattern=on \pgflinewidth off \pgflinewidth]
\tikzstyle{superdense}=[dash pattern=on 4pt off 1pt]

\newcommand{\mcM}{\mathcal{M}}

\newcommand{\mcI}{\mathcal{I}}

\newcommand{\mcD}{\mathcal{D}}

\newcommand{\mcQ}{\mathcal{Q}}

\newcommand{\mcJ}{\mathcal{J}}

\newcommand{\beq}{\begin{equation}}
\newcommand{\eeq}{\end{equation}}

\usepackage{empheq}

\newcommand{\mbn}{\mathbf{n}}

\def\Labe{\mathfrak{e}}

\def\Labn{\mathfrak{n}}

\def\${|\!|\!|}

\newcounter{theorem}

\newtheorem{ex}[theorem]{Example}

\newenvironment{DIFnomarkup}{}{} 

\newtheorem{assumption}{Assumption}
 
\theorembodyfont{\rmfamily}

\newfont{\indic}{bbmss12}

\def\PPi{\boldsymbol{\Pi}}

\def\Nabla_#1{\nabla_{\!#1}}

\makeatletter
\pgfdeclareshape{crosscircle}
{
  \inheritsavedanchors[from=circle] 
  \inheritanchorborder[from=circle]
  \inheritanchor[from=circle]{north}
  \inheritanchor[from=circle]{north west}
  \inheritanchor[from=circle]{north east}
  \inheritanchor[from=circle]{center}
  \inheritanchor[from=circle]{west}
  \inheritanchor[from=circle]{east}
  \inheritanchor[from=circle]{mid}
  \inheritanchor[from=circle]{mid west}
  \inheritanchor[from=circle]{mid east}
  \inheritanchor[from=circle]{base}
  \inheritanchor[from=circle]{base west}
  \inheritanchor[from=circle]{base east}
  \inheritanchor[from=circle]{south}
  \inheritanchor[from=circle]{south west}
  \inheritanchor[from=circle]{south east}
  \inheritbackgroundpath[from=circle]
  \foregroundpath{
    \centerpoint%
    \pgf@xc=\pgf@x%
    \pgf@yc=\pgf@y%
    \pgfutil@tempdima=\radius%
    \pgfmathsetlength{\pgf@xb}{\pgfkeysvalueof{/pgf/outer xsep}}%
    \pgfmathsetlength{\pgf@yb}{\pgfkeysvalueof{/pgf/outer ysep}}%
    \ifdim\pgf@xb<\pgf@yb%
      \advance\pgfutil@tempdima by-\pgf@yb%
    \else%
      \advance\pgfutil@tempdima by-\pgf@xb%
    \fi%
    \pgfpathmoveto{\pgfpointadd{\pgfqpoint{\pgf@xc}{\pgf@yc}}{\pgfqpoint{-0.707107\pgfutil@tempdima}{-0.707107\pgfutil@tempdima}}}
    \pgfpathlineto{\pgfpointadd{\pgfqpoint{\pgf@xc}{\pgf@yc}}{\pgfqpoint{0.707107\pgfutil@tempdima}{0.707107\pgfutil@tempdima}}}
    \pgfpathmoveto{\pgfpointadd{\pgfqpoint{\pgf@xc}{\pgf@yc}}{\pgfqpoint{-0.707107\pgfutil@tempdima}{0.707107\pgfutil@tempdima}}}
    \pgfpathlineto{\pgfpointadd{\pgfqpoint{\pgf@xc}{\pgf@yc}}{\pgfqpoint{0.707107\pgfutil@tempdima}{-0.707107\pgfutil@tempdima}}}
  }
}
\makeatother

\def\symbol#1{\textcolor{symbols}{#1}}

\def\decorate#1#2{
        \ifnum#2>0
    		\foreach \count in {1,...,#2}{
	       	let
				\p1 = (sourcenode.center),
                \p2 = (sourcenode.east),
				\n1 = {\x2-\x1},
				\n2 = {1mm},
				\n3 = {(1.3+0.6*(\count-1))*\n1},
				\n4 = {0.7*\n1}
			in 
        		node[rectangle,fill=symbols,rotate=30,inner sep=0pt,minimum width=0.2*\n2,minimum height=\n2] at ($(sourcenode.center) + (\n3,\n4)$) {}
				}
		\fi
        \ifnum#1>0
    		\foreach \count in {1,...,#1}{
	       	let
				\p1 = (sourcenode.center),
                \p2 = (sourcenode.east),
				\n1 = {\x2-\x1},
				\n2 = {1mm},
				\n3 = {(1.3+0.6*(\count-1))*\n1},
				\n4 = {0.7*\n1}
			in 
        		node[rectangle,fill=symbols,rotate=-30,inner sep=0pt,minimum width=0.2*\n2,minimum height=\n2] at ($(sourcenode.center) + (-\n3,\n4)$) {}
				}
		\fi
}

\tikzset{
    dectriangle/.style 2 args={
        triangle,
        alias=sourcenode,
        append after command={\decorate{#1}{#2}}
    },
    dectriangle/.default={0}{0},
}

\tikzset{
	cross/.style={path picture={ 
  		\draw[symbols]
			(path picture bounding box.south east) -- (path picture bounding box.north west) (path picture bounding box.south west) -- (path picture bounding box.north east);
		}},
root/.style={circle,fill=green!50!black,inner sep=0pt, minimum size=1.2mm},
        dot/.style={circle,fill=pageforeground,inner sep=0pt, minimum size=1mm},
        dotred/.style={circle,fill=pageforeground!50!pagebackground,inner sep=0pt, minimum size=2mm},
        var/.style={circle,fill=pageforeground!10!pagebackground,draw=pageforeground,inner sep=0pt, minimum size=3mm},
        kernel/.style={semithick,shorten >=2pt,shorten <=2pt},
        kernels/.style={snake=zigzag,shorten >=2pt,shorten <=2pt,segment amplitude=1pt,segment length=4pt,line before snake=2pt,line after snake=5pt,},
        rho/.style={densely dashed,semithick,shorten >=2pt,shorten <=2pt},
           testfcn/.style={dotted,semithick,shorten >=2pt,shorten <=2pt},
        renorm/.style={shape=circle,fill=pagebackground,inner sep=1pt},
        labl/.style={shape=rectangle,fill=pagebackground,inner sep=1pt},
        xic/.style={very thin,circle,draw=symbols,fill=symbols,inner sep=0pt,minimum size=1.2mm},
        g/.style={very thin,rectangle,draw=symbols,fill=symbols!10!pagebackground,inner sep=0pt,minimum width=2.5mm,minimum height=1.2mm},
        xi/.style={very thin,circle,draw=symbols,fill=symbols!10!pagebackground,inner sep=0pt,minimum size=1.2mm},
	xies/.style={very thin,rectangle,fill=green!50!black!25,draw=symbols,inner sep=0pt,minimum size=1.1mm},
	xiesf/.style={very thin,rectangle,fill=green!50!black,draw=symbols,inner sep=0pt,minimum size=1.1mm},
        xix/.style={very thin,crosscircle,fill=symbols!10!pagebackground,draw=symbols,inner sep=0pt,minimum size=1.2mm},
        X/.style={very thin,cross,rectangle,fill=pagebackground,draw=symbols,inner sep=0pt,minimum size=1.2mm},
	xib/.style={thin,circle,fill=symbols!10!pagebackground,draw=symbols,inner sep=0pt,minimum size=1.6mm},
	xie/.style={thin,circle,fill=green!50!black,draw=symbols,inner sep=0pt,minimum size=1.6mm},
	xid/.style={thin,circle,fill=symbols,draw=symbols,inner sep=0pt,minimum size=1.6mm},
	xibx/.style={thin,crosscircle,fill=symbols!10!pagebackground,draw=symbols,inner sep=0pt,minimum size=1.6mm},
	kernels2/.style={very thick,draw=connection,segment length=12pt},
	keps/.style={thin,draw=symbols,->},
	kepspr/.style={thick,draw=connection,->},
	krho/.style={thin,draw=symbols,superdense,->},
	krhopr/.style={thick,draw=connection,superdense},
	triangle/.style = { regular polygon, regular polygon sides=3},
	not/.style={thin,circle,draw=connection,fill=connection,inner sep=0pt,minimum size=0.5mm},
	diff/.style = {very thin,draw=symbols,triangle,fill=red!50!black,inner sep=0pt,minimum size=1.6mm},
	diff1/.style = {very thin,dectriangle={1}{0},fill=red!50!black,draw=symbols,inner sep=0pt,minimum size=1.6mm},
	diff2/.style = {very thin,dectriangle={1}{1},fill=red!50!black,draw=symbols,inner sep=0pt,minimum size=1.6mm},
		diffmini/.style = {very thin,rectangle,fill=black,draw=black,inner sep=0pt,minimum size=0.75mm},
	 kernelsmod/.style={very thick,draw=connection,segment length=12pt},
	 rec/.style = {very thin,rectangle,fill=black,draw=black,inner sep=0pt,minimum size=2mm},
	cerc/.style={very thin,circle,draw=black,fill=symbols,inner sep=0pt,minimum size=2mm},
	stars/.style={very thin,star,star points=6,star point ratio=0.5, draw=black,fill=red,inner sep=0pt,minimum size=0.7mm},
	>=stealth,
        }
        \tikzset{
root/.style={circle,fill=black!50,inner sep=0pt, minimum size=3mm},
        circ/.style={circle,fill=white,draw=black,very thin,inner sep=.5pt, minimum size=1.2mm},
        round1/.style={fill=white,outer sep = 0,inner sep=2pt,rounded corners=1mm,draw,text=black,thin,minimum size=1.2mm},
          circ1/.style={circle,fill=red!10,draw=red,very thin,inner sep=.5pt, minimum size=1.2mm},
        rect/.style={fill=white,outer sep = 0,inner sep=2pt,rectangle,draw,text=black,thin,minimum size=1.2mm},
        rect1/.style={fill=white,outer sep = 0,inner sep=2pt,rectangle,draw,text=black,thin,minimum size=1.2mm},
        round2/.style={fill=red!10,outer sep = 0,inner sep=2pt,rounded corners=1mm,draw,text=black,thin,minimum size=1.2mm},
       round3/.style={fill=blue!10,outer sep = 0,inner sep=2pt,rounded corners=1mm,draw,text=black,thin,minimum size=1.2mm}, 
        rect2/.style={fill=black!10,outer sep = 0,inner sep=2pt,rectangle,draw,text=black,thin,minimum size=1.2mm},
        dot/.style={circle,fill=black,inner sep=0pt, minimum size=1.2mm},
        dotred/.style={circle,fill=black!50,inner sep=0pt, minimum size=2mm},
        var/.style={circle,fill=black!10,draw=black,inner sep=0pt, minimum size=3mm},
        kernel/.style={semithick,shorten >=2pt,shorten <=2pt},
         diag/.style={thin,shorten >=4pt,shorten <=4pt},
        kernel1/.style={thick},
        kernels/.style={snake=zigzag,shorten >=2pt,shorten <=2pt,segment amplitude=1pt,segment length=4pt,line before snake=2pt,line after snake=5pt,},
		kernels1/.style={snake=zigzag,segment amplitude=0.5pt,segment length=2pt},
		rho1/.style={densely dotted,semithick},
        rho/.style={densely dashed,semithick,shorten >=2pt,shorten <=2pt},
           testfcn/.style={dotted,semithick,shorten >=2pt,shorten <=2pt},
           visible/.style={draw, circle, fill, inner sep=0.25ex},
        renorm/.style={shape=circle,fill=white,inner sep=1pt},
        labl/.style={shape=rectangle,fill=white,inner sep=1pt},
        xic/.style={very thin,circle,fill=symbols,draw=black,inner sep=0pt,minimum size=1.2mm},
        xi/.style={very thin,circle,fill=blue!10,draw=black,inner sep=0pt,minimum size=1.2mm},
	xib/.style={very thin,circle,fill=blue!10,draw=black,inner sep=0pt,minimum size=1.6mm},
	xie/.style={very thin,circle,fill=green!50!black,draw=black,inner sep=0pt,minimum size=1mm},
	xid/.style={very thin,circle,fill=symbols,draw=black,inner sep=0pt,minimum size=1.6mm},
	edgetype/.style={very thin,circle,draw=black,inner sep=0pt,minimum size=5mm},
	nodetype/.style={very thick,circle,draw=black,inner sep=0pt,minimum size=5mm},
	kernels2/.style={very thick,draw=connection,segment length=12pt},
clean/.style={thin,circle,fill=black,inner sep=0pt,minimum size=1mm},	not/.style={thin,circle,fill=symbols,draw=connection,fill=connection,inner sep=0pt,minimum size=0.8mm},
	>=stealth,
        }

\makeatletter
\def\DeclareSymbol#1#2#3{%
	\expandafter\gdef\csname MH@symb@#1\endcsname{\tikzsetnextfilename{symbol#1}%
	\tikz[baseline=#2,scale=0.15,draw=symbols,line join=round]{#3}}%
	\expandafter\gdef\csname MH@symb@#1s\endcsname{\scalebox{0.75}{\tikzsetnextfilename{symbol#1}%
	\tikz[baseline=#2,scale=0.15,draw=symbols,line join=round]{#3}}}%
	\expandafter\gdef\csname MH@symb@#1ss\endcsname{\scalebox{0.65}{\tikzsetnextfilename{symbol#1}%
	\tikz[baseline=#2,scale=0.15,draw=symbols,line join=round]{#3}}}%
	}
\def\<#1>{\ifthenelse{\boolean{mmode}}{\mathchoice{\csname MH@symb@#1\endcsname}{\csname MH@symb@#1\endcsname}{\csname MH@symb@#1s\endcsname}{\csname MH@symb@#1ss\endcsname}}{\csname MH@symb@#1\endcsname}}
\makeatother

 \def\1{\mathbf{\symbol{1}}}

\def\one{\mathbf{1}}

\DeclareMathAlphabet{\mathpzc}{OT1}{pzc}{m}{it}

\let\d\partial

\def\eqref#1{(\ref{#1})}

\def\dint{\mathrm{d}}

\makeatletter 
\newcommand*{\bigcdot}{}
\DeclareRobustCommand*{\bigcdot}{%
  \mathbin{\mathpalette\bigcdot@{}}%
}
\newcommand*{\bigcdot@scalefactor}{.5}
\newcommand*{\bigcdot@widthfactor}{1.15}
\newcommand*{\bigcdot@}[2]{%
  \sbox0{$#1\vcenter{}$}
  \sbox2{$#1\cdot\m@th$}%
  \hbox to \bigcdot@widthfactor\wd2{%
    \hfil
    \raise\ht0\hbox{%
      \scalebox{\bigcdot@scalefactor}{%
        \lower\ht0\hbox{$#1\bullet\m@th$}%
      }%
    }%
    \hfil
  }%
}
\makeatother

\def\act{\bigcdot}

\tcbset
{colframe=boxcolor,colback=symbols!7!pagebackground,coltext=pageforeground,
fonttitle=\bfseries,nobeforeafter,center title,size=fbox,boxsep=1.5pt,
top=0mm,bottom=0mm,boxsep=0mm,tcbox raise base}

\def\two{{\<generic>\kern0.05em\<genericb>}}
\def\twoI{{\<Ito>\kern0.05em\<Itob>}}

\def\mail#1{\burlalt{#1}{mailto:#1}}

\usepackage{thmtools}

\begin{document}

\title{Recentering with Malliavin derivative}

\author{Yvain Bruned and Aurélien Minguella}
\institute{ 
 Universite de Lorraine, CNRS, IECL, F-54000 Nancy, France
  \\
Email:\ \begin{minipage}[t]{\linewidth}
\mail{yvain.bruned@univ-lorraine.fr}
\\  \mail{aurelien.minguella@univ-lorraine.fr}
\end{minipage}}

\maketitle 

\begin{abstract}
We provide an algebraic unification of the spectral gap proofs of the convergence of the renormalised model for regularity structures. We show that the key recentering map $\dint \Gamma$ used in the literature for adjusting the recentering of the model is given via equivalent characterisations. 
\end{abstract}
\setcounter{tocdepth}{2}
\setcounter{secnumdepth}{4}
\tableofcontents

 \section{Introduction}
 
 One of the main challenges in showing the local well-posedness of a large class of singular stochastic partial differential equations (SPDEs) was to describe systematically an ansatz of the solution formed by iterated stochastic integrals that converge after a suitable renormalisation procedure. This has been addressed in great generality within the theory of regularity structures introduced by Martin Hairer in \cite{reg}. One uses local expansions where the iterated integrals of the ansatz are localised around a base point and play the role of monomials. The combinatorial structures that describe these integrals are based on Hopf algebras of decorated trees and forests given in \cite{reg,BHZ}, which allow one to build renormalised models, \ie, a collection of localised renormalised stochastic integrals. The renormalisation is implemented following the celebrated BPHZ scheme developed in QFT for the renormalisation of Feynman diagrams (see \cite{BP57,KH69,WZ69}).
 Then, in \cite{CH16}, the convergence is shown by computing the cumulants of these renormalised stochastic integrals, which produces Feynman diagrams. These diagrams also incorporate the recentering procedure which makes the proof of convergence quite tricky as one has to deal with two analytical procedures simultaneously: renormalisation and recentering. The work \cite{BCCH} shows that, given the renormalised ansatz, described via a B-series, one can obtain the renormalised equation and perform a fixed point argument to get local well-posedness. 
 
 In \cite{OSSW,LOT}, the authors propose a different combinatorial framework, multi-indices, to encode the regularity structures ansatz for scalar-valued equations. The main idea is to parametrise the solution ansatz via the non-linearity of the equation, which boils down to grouping stochastic integrals sharing the same coefficient in the local expansion. By doing so, one does not have direct access  to the stochastic integrals, and therefore to the Feynman diagrams, to show the convergence of the model. In \cite{LOTP}, the authors developed a recursive approach to prove the convergence of the model based on a spectral gap inequality. Let us briefly summarise their approach. One considers a semi-linear equation
 \begin{equs} \label{main_equation}
 	(\partial_t - \mathcal{L}) u = f(u, \nabla u ,...,  \xi)
 \end{equs}
 where $f$ is affine in the space-time noise $ \xi $ and depends on $u$ and its derivatives, $ \mathcal{L}$ is a differential operator satisfying a Schauder estimate, such as the Laplacian. One wants to control the $p$-th moment of the recentered distribution $ \Pi_x \tau $ indexed by a combinatorial object $\tau$, that could be a decorated tree or a multi-index. These terms appear in the local expansion of the right hand side of \eqref{main_equation}:
 \begin{equs}
 	f(u, \nabla u ,...,\xi) = \sum_{\tau \in \CT} \frac{\Upsilon_f[\tau](u,\nabla u,...)(x)}{S(\tau)} \Pi_x \tau   + R_{\CT,x}
 \end{equs}
 where $ \CT $ is a finite combinatorial set, $ R_{\CT,x} $ is a Taylor remainder,  $S(\tau)$ is a symmetry factor associated with $\tau$, and $\Upsilon_{f}[\tau]$ are elementary differentials.
 The main idea of \cite{LOTP} is to use the following spectral gap inequality,
 \begin{equs}
 	\mathbb{E}(|\Pi_x \tau|^p)^{\frac{1}{p}} \leq |\mathbb{E}(\Pi_x \tau)| + \mathbb{E}( \Vert \delta \Pi_x \tau \Vert_{\star}^p )^{\frac{1}{p}},
 \end{equs}
 where $ \Vert \cdot \Vert_{\star} $ is a suitable functional norm, and $\delta \Pi_x  $ is the Malliavin derivative of the model $\Pi_x$. Then, one needs to have the control of the first moment of $\Pi_x \tau$ which is given by the BPHZ renormalisation scheme. The construction of $ \delta \Pi_x \tau $  is performed via a reconstruction argument. Indeed, taking the Malliavin derivative boils down to taking an abstract derivative on the combinatorial objects, which can be read in the commutation relation
 \begin{equs} \label{commutation_delta}
 	\delta \Pi_x \tau = \Pi_x D_{\Xi} \tau,
 \end{equs}
 where $ D_{\Xi} $ changes a decoration associated with a noise $\xi$ into a decoration associated with $\delta \xi$, the noise with a Malliavin derivative. This identity has been proved for decorated trees renormalised models in \cite[Proposition 4.1]{BN23}. Then, taking into account that $ \delta \xi $ has better regularity than $\xi$, one can observe that $ D_{\Xi} $ is a linear combination of combinatorial objects with positive degree. This is the reason why we can apply a reconstruction argument for this term. The success of this argument depends on a recursive structure  where one has to construct the model for $\sigma$ of smaller size than $\tau$.
 
 Different proofs exploiting this idea exist in the literature. The paper \cite{BH25} gives an overview of the various proofs for the convergence of the models.  We briefly present the spectral gap proofs
 \begin{itemize}
 	\item The original proof in \cite{LOTP} was conducted for quasilinear equations with multi-indices. This proof was reused to construct the model in \cite{T25} and the thin-film equation in \cite{GT}. The $ \Phi^4$ model is treated in \cite{BOT25}.
 	\item The proof in \cite{HS23} uses decorated trees and the notion of pointed modelled distribution for the reconstruction which looks similar to the analytical part of \cite{LOTP}. Their result works for a broad class of singular SPDEs.
 	\item The work \cite{BH23} adds another parameter to the model to measure integrability, which allows one to give a different version of the analytical arguments.
 \end{itemize}
 Let us stress that around the time when \cite{LOT} was completed, Pawel Duch in \cite{Duc21} also proposed a recursive approach to the convergence of stochastic integrals inspired by the Polchinski flow \cite{P84}. 
 
 As noticed in \cite[Definition 4.3]{BN23}, one can identify a modelled distribution whose reconstruction will yield $\delta \Pi_x \tau$, given by 
 \begin{equs} \label{dGamma}
 	\dint\Gamma_{yx} \tau=\mcQ_0\left(\Gamma_{yx}\otimes f_x\right)\hat\Delta D_\Xi\tau,
 	\end{equs}
 	where $ \Gamma_{xy} $ allows one to change the base point of the expansion, \ie
 	\begin{equs}
 		\Pi_x \Gamma_{xy} = \Pi_y.
 	\end{equs}
 	 The map $f_x$ is a character that defines the model $\Pi_x$ via a coproduct introduced in \cite{reg}
 	 \begin{equs}
 	 	\Pi_x = \left( \PPi \otimes f_x \right) \Delta
 	 \end{equs}
 where $\PPi$ is called a pre-model. The term $\PPi \tau$ is the stochastic iterated integral associated with $\tau$ without recentering. The map $\CQ_0$ is a projection that keeps only combinatorial terms without the abstract Malliavin derivative. Finally, $ \hat{\Delta} $ is the map associated with $\Delta$ that is used to shorten the recentering. Indeed, one can define a model $ \Pi^0_x $ that will not distinguish between the noise $\xi$ and $\delta \xi$ for the recentering and one has from \cite[Proposition 3.7]{BN23}
 \begin{equs}
 	\Pi^0_x = \left( \Pi_x \otimes f_x \right) \hat{\Delta}
 \end{equs}
 which is exactly the identity used in \cite[Lemma 8]{BH23}. Here, the model $\Pi_x^0 $ does not take into account the Malliavin derivative in the recentering which is not the case for $ \Pi_x $. In the sequel, the model $\Pi_x$ is replaced by two models $ \Pi_x^1 $ and $\Pi_x^2$ depending on the degree associated with the noise with a Malliavin derivative. We also replace $ \hat{\Delta} $ by suitable maps $ \hat{\Delta}_1$ and $ \hat{\Delta}_2 $. In the end, when one reconstructs the modelled distribution $\dint\Gamma_{\cdot x} \tau$ in the case of a smooth model, one gets 
 \begin{equs}
 	(\mathcal{R} \dint\Gamma_{\cdot x} \tau)(y) = (\Pi_y \dint \Gamma_{yx} \tau )(y) = (\delta \Pi_x \tau)(y),
 \end{equs}
where we have used the formula $ \mathcal{R}(\cdot)(y) = (\Pi_y \cdot)(y) $ for the reconstruction operator in the smooth case. The previous equality was proved in \cite[Theorem 4.5]{BN23}.
 Despite the dictionary proposed in \cite{BN23} in order to rewrite the main algebraic arguments of \cite{LOTP} using decorated trees not on the dual side, it is not clear that the $\dint \Gamma$ defined in \cite{BN23} corresponds to the one in \cite{LOTP}, nor how it is related to the construction of \cite{HS23}. The main result of this paper is to provide a clarification and to show that variations of the same map are used in all the spectral gap proofs. 
\begin{theorem} \label{main_theorem}
	 The algebraic parts of the convergence proofs via the spectral gap inequality in  \cite{LOT,HS23,BH23} rely on a recentering map $\dint\Gamma$ whose expression on decorated trees is given by \eqref{dGamma}
	\end{theorem}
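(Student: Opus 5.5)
The statement is a meta-theorem: it asserts that three a priori different recentering maps coincide with \eqref{dGamma}. I will therefore split the proof into three comparisons, one for each of \cite{LOT}/\cite{LOTP}, \cite{HS23} and \cite{BH23}. In each case I first translate the map used there into the language of decorated trees, and then show that it agrees with $\mcQ_0(\Gamma_{yx}\otimes f_x)\hat\Delta D_\Xi$.

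The common backbone is the characterisation of $\dint\Gamma$ by properties rather than by a formula. I would prove that a linear map $\dint\Gamma_{yx}$, acting on $D_\Xi\tau$ for $\tau$ of a given size, is uniquely determined by three properties:
\begin{itemize}
\item[(i)] it takes values in the span of trees without the Malliavin-differentiated noise, through the projection $\mcQ_0$;
\item[(ii)] it satisfies the reconstruction identity $(\Pi_y\dint\Gamma_{yx}\tau)(y)=(\delta\Pi_x\tau)(y)$ for smooth models, i.e.\ \cite[Theorem 4.5]{BN23};
\item[(iii)] it satisfies the recursive transition rule $\dint\Gamma_{zx}=\Gamma_{zy}\dint\Gamma_{yx}+\dint\Gamma_{zy}\Gamma_{yx}$.
\end{itemize}
Property (iii) is the Leibniz-type rule obtained by differentiating $\Gamma_{zx}=\Gamma_{zy}\Gamma_{yx}$.

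To verify (iii) for \eqref{dGamma}, I would use the coassociativity of $\hat\Delta$ with $\Delta$, together with the factorisation $\Pi^0_x=(\Pi_x\otimes f_x)\hat\Delta$ of \cite[Proposition 3.7]{BN23}. I would also split $\hat\Delta$ into $\hat\Delta_1,\hat\Delta_2$ according to the degree of the $\delta\xi$ component, matching the two models $\Pi^1_x,\Pi^2_x$. Uniqueness then follows by induction on the size of $\tau$: for polynomial components the values are pinned down by the positive-degree observation (the $D_\Xi\tau$ consists of positive-degree terms), and for the remaining components the values are fixed recursively by the previously constructed $\Gamma$ maps.

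With this characterisation in hand, each comparison reduces to checking (i)--(iii) for the map in that paper.
\begin{itemize}
\item For \cite{LOTP,LOT}, the map lives on the dual multi-index side. I would use the dictionary of \cite{BN23}, which transfers multi-index identities to trees via the morphism from decorated trees to multi-indices. The defining relation there is stated through $\delta\Pi_x-\dint\Gamma^*_{x}\Pi_x$ having vanishing Taylor part to the appropriate order. Dualising this relation gives (ii) and the order-vanishing condition, which replaces (i).
\item For \cite{HS23}, the pointed modelled distribution is built explicitly by recursion along the tree grafting. I would check (iii) directly from that recursion; the coefficient extraction there plays the role of $\mcQ_0$.
\item For \cite{BH23}, the identity of \cite[Lemma 8]{BH23} is literally $\Pi^0_x=(\Pi_x\otimes f_x)\hat\Delta$. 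Their recentering map is therefore \eqref{dGamma} once $\Pi^0$ is restricted through $\mcQ_0$ and the extra integrability parameter is frozen.
\end{itemize}

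The main obstacle will be the \cite{LOT} comparison. The multi-index side does not distinguish trees with the same fertility counts, and $\dint\Gamma$ there is defined implicitly, by a population/ansatz argument on the dual. Transferring it requires the morphism to intertwine $\hat\Delta$ and $D_\Xi$ with their multi-index counterparts, including the symmetry factors $S(\tau)$. I would first attempt to establish this intertwining at the level of the derivations generating each coproduct, since these are pre-Lie/derivation-type maps compatible with the dictionary. I would then lift it to the full maps by the universal property.
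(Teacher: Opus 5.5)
Your three-way split (one comparison per paper) matches the shape of the paper's argument, and your remark on \cite{BH23} agrees with how the paper handles it: \cite[Lemma 8]{BH23} is the identity $\Pi^0_x=(\Pi_x\otimes f_x)\hat\Delta$. However, the unifying characterisation (i)--(iii) that everything rests on is false, and the ingredients that actually carry the two nontrivial comparisons are missing.

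\textbf{Property (iii) fails for \eqref{dGamma}.} Take $z=y=x$. Since $\Gamma_{xx}=\mathrm{id}$, (iii) gives $\dint\Gamma_{xx}=2\,\dint\Gamma_{xx}$, i.e.\ $\dint\Gamma_{xx}=0$. But for generalised KPZ the $X_1$-coefficient of $\dint\Gamma^1_{xx}\mcI(\Xi)$ is $(\d K*\delta\xi)(x)$ (Example~\ref{ex_2}), and Theorem~\ref{Thm_Martin} gives $\dint\Gamma^2_{xx}\Xi=\dot\Xi+\delta\xi(x)\one$. Differentiating $\Gamma_{zx}=\Gamma_{zy}\Gamma_{yx}$ produces a Leibniz rule for the Malliavin derivative $\delta\Gamma$, which vanishes on the diagonal. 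The map $\dint\Gamma_{yx}$ is a different object: for fixed $x$ it is a modelled distribution in $y$ whose reconstruction is $\delta\Pi_x\tau$. In the base point it is \emph{transported}, not differentiated. For $\dint\Gamma^2$, which has no projection, $\Gamma^2_{zy}\dint\Gamma^2_{yx}=\dint\Gamma^2_{zx}$ follows at once from $\Gamma^2_{zy}\Gamma^2_{yx}=\Gamma^2_{zx}$ acting on the left tensor factor. For $\dint\Gamma^1$, the projection $\mcQ_0$ does not commute with $\Gamma^1_{zy}$, so even transport fails. Your planned coassociativity check of (iii) therefore cannot succeed, and the uniqueness induction built on it collapses. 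Property (i) is also wrong for the \cite{HS23} variant: $\dint\Gamma^2$ carries no $\mcQ_0$ and takes values containing $\dot\Xi$.

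\textbf{Order-zero reconstruction is not enough, and the mechanism of Theorem~\ref{th:local without Rnew} is absent.} Property (ii) at order zero cannot determine $\dint\Gamma$. Since $(\Pi_yX^k)(y)=0$ for $k\neq0$, it is blind to every polynomial coefficient. For instance, the $X_1$-coefficient above is fixed only by $\d_1(\Pi_y\dint\Gamma^1_{yx}\mcI(\Xi))(y)=\d_1(\delta\Pi_x\mcI(\Xi))(y)$. The characterisation in \cite{LOTP} is the vanishing of $\d^n(\Pi_y\dint\Gamma^1_{yx}\tau-\delta\Pi_x\tau)(y)$ for all $|n|_\s\le N$. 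Proving this for the tree formula is the real content, namely Theorem~\ref{th:local without Rnew}, and its mechanism is missing from your plan. By Lemma~\ref{commutmult}, the $\dot\Xi$-carrying terms discarded by $\mcQ_0$ contribute $\d^n(\Pi^{1,\times}_y\mcI_a(D_\Xi\tau'))(y)=(\Pi^{1,\times}_y\mcI_{a+n}(D_\Xi\tau'))(y)$. This vanishes because the strengthened Assumption~\ref{assumpt1} gives $\deg_1(\mcI_{a+n}(D_\Xi\tau'))>0$ for every $|n|_\s\le N$, not only for $n=0$. You use positivity of degree only at $n=0$, and only for a uniqueness claim. Once this is proved, passing to the dual side is the short scalar-product computation of Proposition~\ref{propeqdual}. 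No intertwining of $\hat\Delta$ and $D_\Xi$ with multi-index counterparts is needed, since multi-indices are linear combinations of trees.

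\textbf{The \cite{HS23} comparison needs \eqref{backbone}.} The paper does not use base-point properties here. It checks directly the structural recursion defining $\tilde f^\tau_x$ (Theorem~\ref{Thm_Martin}):
\begin{itemize}
\item the value on $\Xi$;
\item the Leibniz rule with respect to the \emph{tree} product $\sigma\tau$;
\item the planted-tree rule, which comes from the commutation identity \eqref{backbone}, $\Gamma^2_{yx}(\mcI_a+\mcJ^2_a(x))=(\mcI_a+\mcJ^2_a(y))\Gamma^2_{yx}$.
\end{itemize}
That identity is the key ingredient your plan for \cite{HS23} lacks.
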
 
	The proof of this theorem is decomposed into two parts:
	\begin{itemize}
		\item Theorem \ref{th:local without Rnew} shows  that one has the identity \cite[Theorem 4.5]{BN23} but at higher order 
		 \begin{equs}
		 	 ( \partial^n \Pi_y \dint \Gamma_{yx} \tau )(y) = ( \partial^n\delta \Pi_x \tau)(y),
		 	\end{equs}
		 	where $n \in \mathbb{N}^{d+1}$ and the identity is true for every $n$ such that $ |n|_{\s} $ the norm of the vector $n$ is smaller than a certain quantity that depends on the equation. 
		This is exactly the characterisation provided by \cite[Section 5.4]{LOTP} and \cite[Section 2.6]{BOT25}.
		\item Theorem \ref{Thm_Martin} shows that a variant of $\dint \Gamma$ satisfies the recursive definition of the main algebraic map $ \tilde{f}_x^{\tau} $ considered in \cite[Section 4.1]{HS23}.
		The proof  relies on a key identity \eqref{backbone} given in \cite[Rem. 14.25]{FrizHai}.
	 It is also related to the recursive formula of $ \Gamma_{xy} $ in terms of $(\Pi_x \cdot)(y)$ given in \cite[Prop. 3.13]{BR18}.
		\end{itemize}
 	Let us briefly summarise the content of this paper by presenting the various sections.
 	In Section \ref{Sec::2},  we recall the main definition (see Definition \ref{def_decorated}) for the decorated trees used for solving singular SPDEs. We introduce two types of noise in order to distinguish the noises when they carry a Malliavin derivative. We define different degrees in \eqref{degree_def} associated with these trees that allows to take  or to not take into account the Malliavin derivative in the recentering. After defining the abstract Malliavin derivative $D_{\Xi}$ in Definition \ref{malliavin_derivative}, we introduce Assumption \ref{assumpt1} satisfied by a large class of singular SPDEs (see Example \ref{ex_assumpt}). This assumption guarantees that a planted tree containing a Malliavin derivative  is of positive degree. This is crucial in the sequel for using the reconstruction argument and the characterisation of the map $\dint \Gamma$. Then, we recall the co-action $ \Delta_i $, the coproduct $ \Delta_i^+ $, the preparation $R$ (see Definition \ref{DefnPreparationMap} and Assumption \ref{assumpt3}), the pre-model $\PPi$ needed for defining a renormalised model $\Pi_x^i$. Here, the index $i$ underlines the fact that we are using the degree $  \deg_i $ for the model. We finish the section with the definition of the central map of the paper $ \dint \Gamma $ (see Definition \ref{maindef}) together with a Hopf algebraic interpretation of this map and we illustrate it on examples (see Example \ref{ex_2}).
 	
 	In Section \ref{Sec::3}, we start to show Theorem \ref{th:local without Rnew}, the characterisation of $\dint \Gamma$ via derivatives. Its proof follows the same steps  as in the proof of \cite[Theorem 4.5]{BN23} and relies crucially on Assumption \ref{malliavin_derivative}. We finish by proving in Proposition \ref{propeqdual} that we obtain the characterisation on the dual side which was introduced in  \cite[Section 5.4]{LOTP}. One can also see it in  the lecture notes \cite[Section 2.7]{BOT25}. The second part of the section is dedicated to the proof of Theorem \ref{Thm_Martin}, the recursive characterisation of $\dint \Gamma$. Its proof boils down to the use of commutative identity between $ \Gamma_{yx} $ and $ \CI_{a} $ (see \eqref{backbone}).

 \subsection*{Acknowledgements}
 
 {\small
 	Y.B. and A.M. gratefully acknowledge funding support from the European Research Council (ERC) through the ERC Starting Grant Low Regularity Dynamics via Decorated Trees (LoRDeT), grant agreement No.\ 101075208. Views and opinions expressed are however those of the author(s) only and do not necessarily reflect those of the European Union or the European Research Council. Neither the European Union nor the granting authority can be held responsible for them.  The motivation for this work emerged while Y.B. was in residence at the Simons Laufer Mathematical Sciences Institute in Berkeley, California, during the Fall 2025 semester. This residence was partially supported by the National Science Foundation under Grant No. DMS-2424139. Y. B. thanks Lorenzo Zambotti for fruitful discussions and for having organised a working group on the diagram-free approach to regularity structures.}

 \section{Decorated trees and recentering map}
 \label{Sec::2}
 
We start the section by recalling the main definitions of \cite{BN23}.
We pick three symbols $\mcI$, $\Xi$, and $\dot\Xi$ and define with it, a set of edge decorations $ \mathcal{D} \coloneqq \lbrace \mcI,\,\Xi,\,\dot\Xi \rbrace \times \mathbb{N}^{d+1}$. The first of the symbols $\mcI$ represents convolution with a kernel that comes from the differential operator of the equation one is interested in, and the symbols $\Xi,\,\dot\Xi$ represent the noise term and an infinitesimal perturbation of it.

\begin{definition} \label{def_decorated}
A decorated tree over $\mathcal{D}$ is a $3$-tuple of the form  $\tau_{\Labe}^{\Labn} =  (\tau,\Labn,\Labe)$ where $\tau$ is a non-planar rooted tree with node set $N_\tau$ and edge set $E_\tau$. The maps $\Labn : N_{\tau} \rightarrow \mathbb{N}^{d+1}$ and $\Labe : E_\tau{\tiny } \rightarrow \mathcal{D}$ are node and edge decorations, respectively.
\end{definition}

We use $T$ to denote the set of decorated trees and $ \mathcal{T} $ is its linear span.  We define a binary tree product by 
\begin{equation}  \label{treeproduct}
 	(\tau,\Labn,\Labe) \cdot  (\tau',\Labn',\Labe') 
 	= (\tau \cdot \tau',\Labn + \Labn', \Labe + \Labe')\;, 
\end{equation} 
where $\tau \cdot \tau'$ is the rooted tree obtained by identifying the roots of $ \tau$ and $\tau'$. The sums $ \Labn + \Labn'$ mean that decorations are added at the root and extended to the disjoint union by setting them to vanish on the other tree. Each edge and vertex of both trees keeps its decoration, except the roots which merge into a new root decorated by the sum of the previous two decorations.

\begin{enumerate}
   \item[(i)] An edge decorated by  $ (\mcI,a) \in \mathcal{D} $  is denoted by $ \mcI_{a} $. The symbol $  \mcI_{a} $ is also viewed as the operation that grafts a tree onto a new root via a new edge with edge decoration $ a $. The new root at hand remains decorated with $0$. 
   
   \item[(ii)]  An edge decorated by $(\Xi,0)$ or $(\dot\Xi,0)$ is denoted by $\Xi$ or $\dot\Xi$ respectively.

   \item[(iii)] A factor $ X^k$   encodes a single node  $ \bullet^{k} $ decorated by $ k \in \mathbb{N}^{d+1}$. We write $ X_i$, $ i \in \lbrace 0,1,\ldots,d\rbrace $, to denote $ X^{e_i}$. Here, we have denoted by $ e_0,...,e_d $ the canonical basis of $ \mathbb{N}^{d+1} $. The element $ X^0 $ which encodes $\bullet^0$, will be denoted by $\one$. The space of all the monomials $X^{k}$ will be denoted by $\bar{T}$, and its linear span by $\bar{\CT}$.
 \end{enumerate}
 
 We assume that the noise type edges appear always with the decorations $ \Xi $ or $\dot{\Xi}$ and always connected to a leaf.
  We denote $T_0$ the set of decorated trees containing no symbol $\dot\Xi$, and $T_1$ the set of decorated trees containing at least one symbol $\dot\Xi$. We consequently define their spans $\CT_0$ and $\CT_1$ respectively.
 
 Using this symbolic notation any decorated tree $ \tau \in T $ can be represented as:
 \begin{equs}\label{eq:treeform}
 \tau = X^k\Xi^l \dot\Xi^m  \prod_{i=1}^n \mcI_{a_i}(\tau_i),
 \end{equs}
 where $ \prod_i $ is the tree product, $ k \in \mathbb{N}^{d+1} $, $l$ , $m \in \mathbb{N} $. In relevant applications, and particularly for the one we have in mind for this article, a product of noises is not allowed and one can only consider the cases for which $ l + m \le 1$. Also, we assume that $ \CI_{a}(X^k)  =0 $ for every $k \in \mathbb{N}^{d+1}$. This comes from the fact that on the analytical side one considers a kernel which will cancel out monomials up to a sufficiently large degree depending on the applications.  A planted tree is a tree of the form $ \mcI_a(\tau) $ meaning that there is only one edge connecting the root to the rest of the tree. We also introduce abstract derivatives $\CD_p$, for $n\in\mathbb{N}^{d+1}$ by requiring that
\begin{equs}
\CD_n\CI_a(\tau)=\CI_{a+n}(\tau),\quad \CD_nX^k = \begin{cases}
\frac{k!}{(k-n)!}X^{k-n} & k \ge n \\
0 & \mbox{otherwise},
\end{cases}
\end{equs}
and finally extending to all of $T$, by Leibniz rule.
Upon the noises we define the degree maps $\deg_0$, $\deg_1$ and  $\deg_2$ by setting
\begin{equs}
\deg_j(\Xi)=\alpha,~\deg_0(\dot\Xi)=\alpha,~\deg_1(\dot\Xi)=\alpha+\frac{d+2}{2},~\deg_2(\dot\Xi)=\alpha+\frac{d+2}{2}+ 2\kappa,
\end{equs}
where $ \alpha $ is the space-time regularity of the noise $ \xi $ encoded by $ \Xi $ and the presence of $\tfrac{d+2}{2}(+ 2\kappa)$ in $\deg_1(\dot\Xi)$ and $\deg_2(\dot\Xi)$ is to keep track of the gain in regularity that comes from taking a Malliavin derivative. $\kappa$ is a small positive constant. From there the degree maps are extended to all of the $T$ by decreeing that for $j\in\{0,1,2\}$:
\begin{equs}
	\label{degree_def}
	\begin{aligned}
&\deg_j(X_0)= 2,&&\deg_j(X_i) = 1,\,\,\,\text{for }i\neq 0, \\
&\deg_j(\sigma\tau)=\deg_j(\sigma)+\deg_j(\tau),\qquad&&\deg_j\left(\mcI_a(\tau) \right)= \deg_j(\tau) + 2 - |a|_{\s},
\end{aligned}
\end{equs}
where we have used parabolic scaling $\s =  (2,1,\hdots,1)$, and some multi-index $n\in\mathbb{N}^{d+1}$ to define the quantity
 \begin{equs}
|n|_{\s}\coloneqq \sum_{i=0}^d \s_i n_i.
\end{equs}
Notice that this means for a multi-index $n$, $\deg_j(X^n) = |n|_{\s}$. We also denote $|\cdot|_\Xi$ the number of noises in $\tau$, regardless of their type. Moreover, the indices $0$ and $1$ have been swapped compared to their counterparts in \cite{BN23}. This is because we need a third degree and we decided to sort them by increasing value.

 Given a subset $ E \subset T $, we define the following set
 \begin{equs}
 	E^{+,i} := \left\lbrace X^k \prod_{j} \mathcal{I}^{+,i}_{a_j}(\tau_j) \, \middle| \, \deg_i(  \mathcal{I}_{a_j}(\tau_j) ) > 0, \, \tau_j \in E, \, k \in \mathbb{N}^{d+1}   \right\rbrace. 
 \end{equs}
The new symbol $ \mathcal{I}^{+,i}_{a_j} $ has been chosen in order to stress the difference between the set above and $ E $, which occurs especially when $ E = T_0 $. There is no constraint on the product at the root for the decorated trees in $ T_0^{+,i} $. The symbol can be extended to any element of $ T $ by sending to zero the trees of negative degree:
\begin{equs}
	\CI^{+,i}_a(\tau) = 0, \quad \deg_i(  \mathcal{I}_{a}(\tau) ) \le 0.
\end{equs}
As above, we denote by $ \mathcal{T}^{+,i} $ the linear span  of $ T^{+,i} $.

We define classical combinatorial coefficients on $T_1$, the symmetry factors. They given inductively on a tree of the form $\tau=X^k\Xi^l\dot\Xi^m\prod_{i=1}^n\CI_{a_i}(\tau_i)^{\beta_i}$, where the couples $(a_i,\tau_i)$ are pairwise disjoint, by
\begin{equation}
S(\tau)=k!\prod_{i=1}^n\beta_i!S(\tau_i)^{\beta_i},
\end{equation}
and $S(\one)=S(\Xi)=S(\dot\Xi)=1$. With this, we can also define a scalar product on $T_1\times T_1$ by
\begin{equation}
\langle\tau,\sigma\rangle=\one_{\tau=\sigma}S(\tau),
\end{equation}
and extend it by bilinearity on $\CT_1\times\CT_1$.

We introduce the abstract derivative on trees that stands for the algebraic counterpart of the Malliavin derivative with respect to the noise. This is Definition 2.2 in \cite{BN23}.
\begin{definition} \label{malliavin_derivative}
The linear map $ D_{\Xi} \,: \, \mathcal{T}_0 \rightarrow \mathcal{T}_1 $ is defined as the derivation that turns $ \Xi $ of a given decorated tree into $ \dot{\Xi} $. It can be defined inductively on the tree structure by requiring
\begin{equs}
D_{\Xi} \left( X^k \Xi \prod_{i=1}^n \mathcal{I}_{a_i}(\tau_i) \right) = X^k \dot\Xi \prod_{i=1}^n &\mathcal{I}_{a_i}(\tau_i) + X^k \Xi  \sum_{j=1}^n \mathcal{I}_{a_j}(D_{\Xi} \tau_j) \prod_{i \neq j} \mathcal{I}_{a_i}(\tau_i),
\end{equs}
and also $D_{\Xi}\left(\Xi\right) = \dot\Xi$, and $D_{\Xi}\left(X^\mbn\right) = D_{\Xi}\left({\mathbf{1}}\right)=0$.
\end{definition}
 
We make the following crucial assumption on the space $ \mathcal{T}_0 $:
\begin{assumption} \label{assumpt1}
For every $ \tau  \in \mathcal{T}_0$, we assume that:
\begin{equs} \label{decomposition_D_Xi}
D_{\Xi} \tau = \sum_{i} \mathcal{I}_{a_i}(D_\Xi \tau_i') \tau_i + \dot{\Xi} \tau', 
\end{equs}
where the $ \tau_i, \tau_i', \tau' $ belongs to $ \mathcal{T}_0 $ and $ \tau' $ does not have any noise at the root. We denote by $N$ the biggest integer such that $\deg_1(\dot\Xi)+2-N>0$. This latter quantity is related to the Malliavin derivative of the solution of the linear equation. We suppose that, for every $|n|_{\s}\leq N$,
\begin{equs} \label{degree_positive}
\deg_1( \mathcal{I}_{a_i+n}(D_\Xi \tau_i') ) > 0,
\end{equs}
where in the later identity, we have made an abuse of notation as $\deg_1$ is not defined for a linear combination of decorated trees. Here, we know that $D_\Xi \tau_i'$ is a linear combination of decorated trees of the same degree. Therefore,  we denote by  $\deg_1(D_\Xi \tau_i' )$ their degree.
\end{assumption}

This assumption is an extension of \cite[Assumption 3]{BN23} where \eqref{degree_positive} is only assumed for $n=0$. This new assumption is needed for showing Theorem \ref{th:local without Rnew} and it is satisfied by most natural models. We consider few models in the following example:

\begin{example} \label{ex_assumpt}
	Let us consiser the $\Phi_{4-\kappa}^{4}$ model with $\kappa > 0$. Its dynamics is given by
	\begin{equs}
		(\partial_t - \Delta) u = - u^3 + \xi, \quad (t,x) \in \mathbb{R}_+ \times \mathbb{T}^4
	\end{equs}
	where the space-time noise $\xi$ has its trajectories in $ \mathcal{C}^{-3 + \kappa} $. Then, one defines $ T_0 $ as
	\begin{equs}
		T_0 = \{\Xi, X^k \CI(\tau_1), X^k \CI(\tau_1) \CI(\tau_2), \CI(\tau_1) \CI(\tau_2) \CI(\tau_3),\, k \in \mathbb{N}^{d+1}, \, \tau_i \in T_0   \}.
		\end{equs}
	We set $ \alpha = -3 + \kappa $. Then, one realises that the decorated tree of lowest degree of the form $ \CI(\tau) $ with $\tau \in T_0$ is given by $ \CI(\Xi) $. One has
	\begin{equs}
		\deg_1( \mathcal{I}(D_\Xi \, \Xi) ) & = \deg_1( \mathcal{I}(\dot{\Xi}) ) = 2 + \alpha + \frac{2+ d}{2}  = 2 + \kappa,
		\\
		\deg_2( \mathcal{I}(D_\Xi \, \Xi) ) & = \deg_2( \mathcal{I}(\dot{\Xi}) )   = 2 + 3 \kappa
		\end{equs} 
		where we have used that $d=4$. Then, one has $N=2$. For the specific model $ \Phi_3^4 $, one considers space-time white noise which means trajectories in $ \alpha = -\frac{5}{2} - \kappa $, then one has
		\begin{equs}
			\deg_1( \mathcal{I}(D_\Xi \, \Xi) )  =  2 - \kappa
			\quad 
			\deg_2( \mathcal{I}(D_\Xi \, \Xi) )  = 2 + \kappa.
			\end{equs}
			In this case, one has $N=1$.
	The second model we consider is the generalised KPZ equation given by
	\begin{equs}
		(\partial_t - \partial_x^2) u = f(u) (\partial_x u)^2 + g(u) \xi, \quad (t,x) \in \mathbb{R}_+ \times \mathbb{T}
	\end{equs}
	where $\xi$ is the space-time white noise. Therefore, $ \alpha = - \frac{3}{2} - \kappa$.
	The set $T_0$ is given by 
	\begin{equs}
		T_0 & = \Biggl\{ X^k \tau,  \prod_{i=1}^n \CI(\tau_i) \Xi,  \prod_{i=1}^n \CI(\tau_i)  \prod_{i=1}^p\CI_{(0,1)}(\sigma_i), \\ & \, p \in \{  0,1,2,3 \}, n \in \mathbb{N},  k \in \mathbb{N}^{d+1}, \, \tau, \tau_i, \sigma_i \in T_0   \Biggl\}
	\end{equs}
	where by convention the previous product $\prod$ are equal to one when $ n=0 $ or $p=0$. Then, in this case the term with lowest degree of the form $\CI_{(0,1)}(\tau)$ is $\mathcal{I}_{(0,1)}(\Xi)$ which gives
	\begin{equs}
			\deg_1( \mathcal{I}_{(0,1)}(D_\Xi \, \Xi) )  =  2- \kappa
			\quad 
			\deg_2( \mathcal{I}(D_\Xi \, \Xi) )  = 2 + \kappa, \quad N = 1.
		\end{equs}
\end{example}

We recall the definition of the co-actions $ \Delta_i : \CT_i \rightarrow \CT_i \otimes \CT^{+,i} $ first defined in \cite{reg}.
\begin{equation} \label{coaction}
\begin{split}
&\Delta_i \mathbf{1} \coloneqq \mathbf{1} \otimes \mathbf{1},\qquad \Delta_i X_j = X_j \otimes \mathbf{1} + \mathbf{1}\otimes X_j, \\
& \Delta_j\Xi = \Xi\otimes\one,~~\Delta_0 \dot\Xi = \dot\Xi \otimes \mathbf{1},~~\Delta_1 \dot\Xi = \dot\Xi \otimes \mathbf{1},~~\Delta_2 \dot\Xi = \dot\Xi \otimes \mathbf{1}+\one\otimes\dot\Xi,\\
&\Delta_i\mcI_a(\tau) \coloneqq (\mcI_a\otimes \mathrm{id} )\Delta_i \tau + \hspace{-4mm}\sum_{\vert\ell +m\vert_{\s} < \deg_i(\mcI_a(\tau))} \frac{X^\ell}{\ell!}\otimes \frac{X^m}{m!}\mcI^{+,i}_{a+\ell+m}(\tau).
\end{split}
\end{equation}
We also supplement these coactions with coproducts $\Delta^{+}_{i}:\CT^{+,i}\rightarrow \CT^{+,i}\otimes \CT^{+,i}$.
\begin{equs} \label{def_delta_+} \begin{aligned}
\Delta^{+}_{i}\sigma\tau&=\Delta^{+}_{i}\sigma\Delta^{+}_{i}\tau, \\
\Delta^{+}_{i}\CI_a^{+,i}(\tau)&=\sum_{|\ell|_{\s}<\deg_i(\CI_a(\tau))}\left(\CI^{+,i}_{a+\ell}\otimes\frac{(-X)^{\ell}}{\ell!}\right)\Delta\tau+\one\otimes\CI^{+,i}_{a}(\tau).
\end{aligned}
\end{equs}
\color{black}

We recall the definition of the preparation maps, first introduced in \cite{BR18}, that we will use to define the renormalised models inductively.
\begin{definition}
 \label{DefnPreparationMap}
A preparation map is a linear map $
R : \CT \rightarrow \CT $ 
that fixes polynomials, noises, planted trees, and such that 
\begin{itemize}
   \item for each $ \tau \in \mathcal{T} $ there exist finitely many $\tau_j \in \mathcal{T}$ and constants $\lambda_j$ such that
\begin{equation} \label{EqAnalytical}
R \tau = \tau + \sum_j \lambda_j \tau_j, \quad\textrm{with}\quad \deg_0(\tau_j) \geq \deg_0(\tau) \quad\textrm{and}\quad |\tau_j|_{\Xi} < |\tau|_{\Xi}.
\end{equation} 
   \item one has 
 \begin{equation} \label{EqCommutationRDelta}
 ( R \otimes \mathrm{id}) \Delta_i = \Delta_i R.
 \end{equation}
 \item The map $ R $ commutes with $ D_{\Xi} $ in the sense that one has on $ \mathcal{T}_0 $:
 \begin{equs} \label{commute_D_Xi}
 R D_{\Xi} = D_{\Xi} R.
 \end{equs}
\end{itemize}
\end{definition}

The commutation property \eqref{commute_D_Xi} is where we differ from the original definition in the literature. It is meant to reflect the fact that by changing a noise $ \xi $ in some tree into an infinitesimal perturbation of itself $ \delta \xi $, it becomes less singular and therefore does not require any renormalisation. 
One natural choice for the preparation map $ R $ is given in \cite{BR18,BB21} by:
\begin{equs}\label{eq:prepmap}
R_{\ell}^{*} \tau =  \sum_{\sigma \in T^-_0} \frac{\ell(\sigma)}{S(\sigma)} \sigma \star \tau,
\end{equs}
where $ T_{0}^-  $ are elements in $ T_{0} $ with negative degree using $ \deg_1 $ or $\deg_2$, the associative product $ \star $ is related to the dual of $ \Delta_i $ and $ \ell : T_0^{-} \rightarrow \mathbb{R} $, which provides the renormalisation constants, is chosen in such a manner so that $ R_{\ell}$ has the various properties of a preparation map. The map $ R_{\ell} $ whose dual map is $ R_{\ell}^{*} $ has been first introduced in \cite{BR18} as an extraction-contraction procedure of a subtree with negative degree happening at the root. We make the following assumption on the preparation map.
\begin{assumption}\label{assumpt3}
We assume that the preparation map R is such that
\begin{equation}
R\CQ_0 = \CQ_0R,
\end{equation}
where $\CQ_0$ projects trees containing at least one Mallivin derivative $\dot\Xi$ to zero.
\end{assumption}
With a preparation map $R$ fixed, one is able to define the corresponding renormalised model $ \Pi^{i}_x $ for $ i \in \lbrace 0,1 \rbrace $ inductively as follows:
\begin{equs} \label{recursive_model}
	\begin{aligned}
\Pi_x^{i} \tau &  = \Pi^{i,\times}_x R \tau, \quad  \Pi^{i,\times}_x (\sigma \tau) = ( \Pi^{i,\times}_x \sigma)
 \times (\Pi^{i,\times}_x\tau), \nonumber \\ 
\big( \Pi_x^{i,\times}\mathcal{I}_a(\tau)\big)(y)& = \big(\d^{a} K  * { \Pi}_x^{i} \tau\big)(y)\nonumber  \\   &- \sum_{|k|_{\s} < \deg_i(\mathcal{I}_a(\tau)) }\frac{(y-x)^k}{k!}  \big(\d^{a+k}  K * { \Pi}_x^{i} \tau\big)(x),
\end{aligned}
\end{equs}
where $ K $ is now a compactly supported function obtained from the kernel associated with the singular SPDEs considered, with the seed given by:

\begin{equs}
\Pi_x^{i}\Xi = \xi,\quad&\Pi_x^{0}\dot\Xi=\delta\xi, \quad\Pi_x^{1}\dot\Xi=\delta\xi,\quad\Pi_{x}^{2}\dot\Xi=\delta\xi-\delta\xi(x), 
\\ \Pi_x^{i}X^{k} & = (\cdot - x)^{k},  \quad \Pi^{i,\times}_{x}\tau = \Pi_x^{i}\tau,\quad\text{for }\tau\in\{\Xi,\dot\Xi\}\cup\bar{T}.
\end{equs}

Before continuing, we need several more definitions. The first one is the following classical object in the regularity structures literature. It is set inductively by

\begin{equation}\label{eq:recf}\begin{split}
f_x^{i}(\mathbf{1})&=1,\qquad f_x^{i}(X_j) = -x_j\mbox{ for }j\in\{0,\hdots,d\},\\
&f_x^{i}(\CI_a^{+,i}(\tau))= - (\d^a K * \Pi_x^{i} \tau)(x).
\end{split}
\end{equation}
One can define the model $\Pi_x^{i}$ using the co-action $\Delta_i$ and $f_x^i$
\begin{equs}
	\label{algebraic_inter}
	\Pi_x^i = \left(\PPi \otimes f_x^i  \right) \Delta_i
\end{equs}
where the map $\PPi$ is the pre-model defined as
\begin{equs}
	\label{pre_model}
	\begin{aligned}
		~&\PPi \tau   = \PPi^{\times} R \tau, \quad  \PPi^{\times} (\sigma \tau) = ( \PPi^{\times} \sigma) \times (\PPi^{\times}\tau),  \\ 
		&\big( \PPi^{\times}\mathcal{I}_a(\tau)\big)(y) = \big(\d^{a} K  * { \PPi} \tau\big)(y) 
	\end{aligned}
\end{equs}
One can notice that this map does not depend on the degree map $  \deg_i$ as there is no recentering in its definition. Another map needed in the sequel is the re-expansion map $ \Gamma_{xy}^i $ given by
\begin{equs}
	\Gamma_{xy}^i  = \left( \id \otimes \gamma_{xy}^i \right) \Delta_i, \quad 
\end{equs}
where $ \gamma_{xy}^i $ is defined by
\begin{equs}
	\gamma_{xy}^i = \left(  f_x \CA_i^+ \otimes f_y \right) \Delta_i^{\!+},
	\end{equs}
	where $ \CA_i^+ : \CT^{+,i} \rightarrow \CT^{+,i} $ is the antipode for the Hopf algebra $\CT^{+,i}$ satisfying 
	\begin{equs}
		\mathcal{M}^{+,i} \left( \CA^{+}_i \otimes \id \right) \Delta_i^+ = \mathcal{M}^{+,i} \left( \id \otimes \CA^{+}_i \right) \Delta_i^+ = \one \one^{*},
	\end{equs}
	and $ 	\mathcal{M}^{+,i} : \CT^{+,i} \otimes \CT^{+,i}  \rightarrow \CT^{+,i} $ is the multiplication map in $ \CT^{+,i}$, $\one^{*} : \CT^{+,i} \rightarrow \R $ is the co-unit being non-zero and equal to $1$ on $ \one $.
	One has also the recursive definition for the antipode $\CA^+_i$ given in \cite[Lemma 8.39]{reg} by
	\begin{equs}
		\label{antipode_+_i}
		\begin{aligned}
	\CA_i^+ (\one) & = \one, \quad	\CA_i^+ (X_j) = - X_j, \quad \CA_i^+ (\sigma\tau  ) = \CA_i^+ (\sigma )  \CA_i^+ ( \tau ),
	\\
	\CA_i^+ (\CI^{+,i}_a(\tau)) & =
-  \sum_{\ell \in \N^{d+1}} \mathcal{M}^{+,i} \left( \CI^{+,i}_{a+\ell} \otimes \frac{X^{\ell}}{\ell!}  \CA^{+}_i \right) \Delta^{\!+}_i \tau.
\end{aligned}
	\end{equs}
We provide a commutation formula between  $ \Gamma_{xy}^i $ and $\CI_a$:
\begin{equation}\label{backbone}
	\Gamma^i_{xy}(\mcI_a+\mcJ_{a}^i(y))=(\mcI_a+\mcJ_{a}^i(x))\Gamma_{xy}^i
\end{equation}
where 
\begin{equation}
	\mcJ_a^i(x)\tau=\sum_{|k|_{\s}<\deg_i(\mcI_a\tau)}(\d^{a+k}K*\Pi_x^i\tau)(x)\frac{X^k}{k!}.
\end{equation}
The formula
\eqref{backbone} is given in \cite[Remark 14.25]{FrizHai}.
 It is also related to the recursive formula of $ \Gamma_{xy} $ in terms of $(\Pi_x \cdot)(y)$ given in \cite[Proposition 3.13]{BR18}.

Finally, we define two maps on $\CT_1$ by
\begin{equs}
	\label{EqDefnDeltaHatCirc}
\begin{aligned}
~&\hat{\Delta}_{1}\bullet = \bullet\otimes \mathbf{1}, \quad \textrm{ for }\bullet\in\big\{\mathbf{1}, X_j, \Xi,\dot\Xi\big\} , \\
&\hat\Delta_2\bullet=\bullet\otimes \mathbf{1}, \quad \textrm{ for }\bullet\in\big\{\mathbf{1}, X_j, \Xi\big\},\quad\hat\Delta_2\dot\Xi=\dot\Xi\otimes \mathbf{1}-\one\otimes\dot\Xi,\\
&\hat{\Delta}_{i} \mcI_a(\tau) = (\mcI_a\otimes\mathrm{id})\hat{\Delta}_{i} \tau \nonumber- \sum_{\deg_0(\mcI_a(\tau)) \leq \vert\ell\vert_{\s}} \frac{X^\ell}{\ell!}\otimes\mcM^{+,i}\big(\mcI^{+,i}_{a+\ell}\otimes\mathrm{id}\big)\hat{\Delta}_{i}\tau, 
\end{aligned}
\end{equs}
\noindent for $i\in\{1,2\}$ and $j\in\{0,\hdots,d\}$. This was firstly defined in  \cite[Proposition 3.8]{BN23}.

With these notations at hand, we are ready to define the central objects of this paper. The first one is \cite[Definition 4.3]{BN23}.

\begin{definition}\label{maindef}
For $\tau\in\CT_0$, we let
\begin{equation}
\dint\Gamma_{yx}^1\tau=\mcQ_0\left(\Gamma_{yx}^1\otimes f_x^1\right)\hat\Delta_1 D_\Xi\tau,
\end{equation}
where $\mcQ_0$ projects to zero all the trees containing the noise $\dot\Xi$,
\begin{equation}
\dint\Gamma_{yx}^2\tau=\left(\Gamma_{yx}^2\otimes f_x^2\right)\hat\Delta_2 D_\Xi\tau.
\end{equation}
\end{definition}

One can propose a different interpretation of the map $ \dint \Gamma^i $ similar to the Hopf algebraic formulation given in  \eqref{algebraic_inter}.
From \cite[Proposition 3.9]{BN23}, one has 
\begin{equs} \label{recrusive_gamma_i}
	\hat{\Delta}_i = \left( \id \otimes \hat{\Gamma}_i \right) \Delta_i
\end{equs}
where
\begin{equs} \label{inter_gamma}
	\begin{aligned}
		~&\hat{\Gamma}_{i} X_j = 0, \quad \hat{\Gamma}_i \one = \one, \quad   \hat{\Gamma}_{2}     \dot{\Xi}  = - \dot{\Xi}, \\ &	 \hat{\Gamma}_i \left( \sum_{\ell \in \mathbb{N}^{d+1}} \frac{X^\ell}{\ell!} \CI_{a+\ell}^{+,i}(\tau) \right)
	 = - \one_{\{ \deg_0(\CI_a (\tau)) \leq 0  \}}  \mathcal{M}^{+,i} \left( \CI_{a}^{+,i}\otimes \id \right) \hat{\Delta}_i \tau.
	 \end{aligned}
	\end{equs}
	Then, combining the two identities \eqref{recrusive_gamma_i} and \eqref{inter_gamma}, one gets
\begin{equs} \label{gamma_i}
		 \hat{\Gamma}_i \left( \sum_{\ell \in \N^{d+1}} \frac{X^\ell}{\ell!} \CI_{a+\ell}^{+,i}(\tau) \right)
	= - \one_{\{ \deg_0(\CI_a (\tau)) \leq 0  \}}  \mathcal{M}^{+,i} \left( \CI_{a}^{+,i}\otimes \hat{\Gamma}_i \right) \Delta_i \tau
\end{equs}
where in the previous equation we have made a change of basis in the definition of $ \hat{\Gamma}_i $ by considering $ \tilde{\CI}^{+,i}_a(\tau) : = \sum_{\ell \in \N^{d+1}} \frac{X^{\ell}}{\ell!} \CI_{a+\ell}^{+,i}(\tau) $
which is very close to the recursive formula for the antipode $  \CA_i^+$ \eqref{antipode_+_i}. Therefore, one has
\begin{equation}
	\dint\Gamma_{yx}^1\tau=\mcQ_0\left(\Gamma_{yx}^1\otimes f_x^1 \hat{\Gamma}_1 \right) \Delta_1 D_\Xi\tau,
\quad
	\dint\Gamma_{yx}^2\tau=\left(\Gamma_{yx}^2\otimes f_x^2 \hat{\Gamma}_2 \right)\Delta_2 D_\Xi\tau.
\end{equation}

\begin{example} \label{ex_2}
We give an example of these two expressions on a simple decorated tree given by $ \CI(\Xi) $ in the case of the generalised KPZ equation.
One has
\begin{equs}
	\hat{\Delta}_{i} \mcI(\dot{\Xi}) = (\mcI\otimes\mathrm{id})\hat{\Delta}_{i} \dot{\Xi} \nonumber- \sum_{\deg_0(\mcI(\tau)) \leq \vert\ell\vert_{\s}} \frac{X^\ell}{\ell!}\otimes\mcM^{+,i}\big(\mcI^{+,i}_{\ell}\otimes\mathrm{id}\big)\hat{\Delta}_{i} \dot{\Xi}.
\end{equs}
Then, one uses the fact that 
\begin{equs}
	(\mcI\otimes\mathrm{id})\hat{\Delta}_{i} \dot{\Xi}  = \CI(\dot{\Xi}) \otimes \one, \quad (\mcI^{+,i}\otimes\mathrm{id})\hat{\Delta}_{i} \dot{\Xi} = \CI^{+,i}(\dot{\Xi}) \otimes \one,
	\end{equs}
	and
	\begin{equs}
\deg_0(  \CI(\dot{\Xi}) )=  \frac{1}{2} - \kappa, \quad 		\deg_1(  \CI(\dot{\Xi})) = 2 - \kappa, \quad 	\deg_2(  \CI(\dot{\Xi})) = 2 +  \kappa
	\end{equs}
	to get
	\begin{equs}
			\hat{\Delta}_{1} \CI(\dot{\Xi}) & = \CI(\dot{\Xi}) \otimes \one - X_1 \otimes \CI_{(0,1)}(\dot{\Xi})
			\\ 
			\hat{\Delta}_{2} \CI(\dot{\Xi}) 
		&	= \CI(\dot{\Xi}) \otimes \one - X_1 \otimes \CI_{(0,1)}(\dot{\Xi})
		- \frac{X_1^2}{2} \otimes \CI_{(0,1)}(\dot{\Xi})
		- X_0 \otimes \CI_{(1,0)}(\dot{\Xi}).
		\end{equs}
		Then, we compute $ \Gamma_{yx}^i \CI(\dot{\Xi}) $. One has 
		\begin{equs}
			\CI(\Gamma_{yx}^i \dot{\Xi}) = \CI(\dot{\Xi}).
		\end{equs}
		From \eqref{backbone}, one gets
		\begin{equs}
			\Gamma^i_{yx}(\mcI(\dot{\Xi})) & = - \mcJ^i(x) \dot{\Xi}+ (\mcI+\mcJ^i(y))\Gamma_{yx}^i \dot{\Xi}
			\\ & =  \sum_{|k|_{\s}<\deg_i(\mcI(\tau))}
			\left(  (\d^{k}K*\delta \xi)(y)
			- (\d^{k}K*\delta \xi)(x) \right)\frac{X^k}{k!} + \CI(\dot{\Xi}).
		\end{equs}
		In the end, one has
		\begin{equs}
			\dint\Gamma^1_{yx}\CI(\dot\Xi)=(\d K*\delta\xi)(y)X+\left((K*\delta\xi)(y)+(y-x)(\d K*\delta\xi)(y)\right)\one.
		\end{equs}
		and 
				\begin{equs}
		&	\dint\Gamma^2_{yx}\CI(\dot\Xi) =
			(\d^2 K*\delta\xi)(y)X_1^2 + (\partial_t K*\delta\xi)(y) X_0
			\\ & +( (\d K*\delta\xi)(y)+ (y_1 -x_1)  (\d^2 K*\delta\xi)(y) ) X_1+  
			 ((K*\delta\xi)(y) \\ & +(y_1-x_1)(\d K*\delta\xi)(y) +
			\frac{(y_1-x_1)^2}{2}(\d^2 K*\delta\xi)(y) + (y_0 - x_0)(\partial_t K*\delta\xi)(y) )\one.
		\end{equs}
\end{example}
 
\section{Characterisations of the recentering map}\label{Sec::3}

 	\subsection{Derivative approach}

In this section, we show Theorem \ref{th:local without Rnew} on $\dint\Gamma_{yx}^1$. More precisely, we explain why the $\dint\Gamma$ map introduced in \cite{BN23} is the same as in the original work \cite{LOTP}, thanks to a characterisation used therein. We begin with the easy lemma.
 \begin{lemma}\label{commutmult}
 We have the commutation property
 \begin{equation}
 \d^n\Pi^{0,\times}_x=\Pi^{0,\times}_x\mathcal{D}_n.
 \end{equation}
 \end{lemma}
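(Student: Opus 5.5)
We argue by induction on the structure of decorated trees, using the representation \eqref{eq:treeform}. Both sides of $\d^n\Pi^{0,\times}_x\tau=\Pi^{0,\times}_x\mathcal{D}_n\tau$ are linear in $\tau$, so it suffices to treat basis trees. It is also enough to prove the identity for $n=e_j$, a single derivative in one direction $j\in\{0,\dots,d\}$. The general case then follows by iteration, since $\mathcal{D}_n=\prod_j\mathcal{D}_{e_j}^{n_j}$ up to the same combinatorial normalisation as for $\d^n$. Indeed, on $\CI_a$ one has $\CD_{e_j}\CD_{e_i}=\CD_{e_i+e_j}$, on monomials the falling-factorial coefficients compose correctly, and derivations iterate consistently.

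The base cases are direct computations. For $X^k$ we use $\Pi^{0,\times}_xX^k=(\cdot-x)^k$, and $\d^{e_j}(\cdot-x)^k=k_j(\cdot-x)^{k-e_j}$ matches $\CD_{e_j}X^k$. For planted trees $\CI_a(\tau)$ we use \eqref{recursive_model} with $i=0$. Differentiating in $y$ gives $(\d^{a+e_j}K*\Pi_x^0\tau)(y)$ minus the sum
\[
\sum_{|k|_{\s}<\deg_0(\CI_a\tau)}\frac{k_j(y-x)^{k-e_j}}{k!}(\d^{a+k}K*\Pi^0_x\tau)(x).
\]
Reindexing $k=k'+e_j$ turns this sum into a Taylor polynomial of order $|k'|_{\s}<\deg_0(\CI_a\tau)-\s_j=\deg_0(\CI_{a+e_j}\tau)$, with derivatives $\d^{a+e_j+k'}K$. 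This is exactly $\Pi^{0,\times}_x\CI_{a+e_j}(\tau)=\Pi^{0,\times}_x\CD_{e_j}\CI_a(\tau)$. The only point to verify here is that the truncation sets match, which the relation $\deg_0(\CI_{a+e_j}\tau)=\deg_0(\CI_a\tau)-\s_j$ provides. This bookkeeping is the main place where care is required.

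For the noises, the convention is that $\CD_n$ acts through the Leibniz rule only on the monomial and planted factors, so $\Xi$ and $\dot\Xi$ carry no decoration change. If the statement is interpreted with noise factors, we restrict to trees for which the identity is meaningful: either the noise appears as a leaf under an $\CI$, or $n$ only hits the other factors. Finally, for products we use the multiplicativity of $\Pi^{0,\times}_x$ together with the Leibniz rule for $\d^{e_j}$, which matches the Leibniz rule defining $\CD_{e_j}$ on tree products.

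Note that $R$ plays no role in this argument, since the statement concerns $\Pi^{0,\times}_x$ only and the planted case only requires $\Pi^0_x\tau$ as a fixed distribution.
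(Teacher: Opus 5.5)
Your computations for monomials, planted trees and products are correct and follow the paper's argument. The only difference is that you prove the case $n=e_j$ and iterate, whereas the paper treats general $n$ in one go: it uses the multi-index Leibniz formula for products, and for planted trees the shift $k\mapsto k+n$ in the Taylor sum together with $\deg_0(\mathcal{I}_{a+n}\tau)=\deg_0(\mathcal{I}_a\tau)-|n|_{\mathfrak{s}}$. Your reduction is harmless, since the $\mathcal{D}_{e_j}$ commute and compose to $\mathcal{D}_n$ on generators, hence everywhere by the Leibniz rule.

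The genuine gap is the paragraph on noises. Under the convention you adopt, that $\mathcal{D}_n$ does not act on $\Xi,\dot\Xi$, the identity is false.
\begin{itemize}
\item For $\tau=\Xi$ one has $\partial^{e_j}\Pi^{0,\times}_x\Xi=\partial_j\xi\neq 0=\Pi^{0,\times}_x\mathcal{D}_{e_j}\Xi$.
\item For a tree such as $\Xi\,\mathcal{I}_a(\sigma)$, which occurs for generalised KPZ, your Leibniz step produces the term $(\partial_j\xi)\,\Pi^{0,\times}_x\mathcal{I}_a(\sigma)$, which has no counterpart on the right-hand side.
\end{itemize}
Restricting to trees ``where $n$ only hits the other factors'' does not define a class of trees. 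As written, you have proved the lemma only for trees with no noise factor at the root of any product component.

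What is missing is an extension of $\mathcal{D}_n$ to noise edges that is compatible with $\Pi^{0,\times}_x$. Set $\mathcal{D}_n\Xi=(\Xi,n)$ and $\mathcal{D}_n\dot\Xi=(\dot\Xi,n)$, using the decorations already available in $\{\mathcal{I},\Xi,\dot\Xi\}\times\mathbb{N}^{d+1}$ and lifting the convention that noise edges carry decoration $0$. Then let $\Pi^{0,\times}_x$ send these to $\partial^n\xi$ and $\partial^n\delta\xi$. The base case becomes a tautology, which is what the paper's ``obvious on elementary trees'' implicitly relies on, and your product step then covers all trees.

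Alternatively, observe that the lemma is only used in the proof of Theorem~\ref{th:local without Rnew}, on planted trees $\mathcal{I}_a(D_\Xi\tau')$, with $\deg_1$ in place of $\deg_0$. For that use your planted-tree computation suffices on its own, because $\mathcal{D}_n\mathcal{I}_a(\tau)=\mathcal{I}_{a+n}(\tau)$ never enters $\tau$.
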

 
 \begin{proof}
 The statement is obvious on elementary trees, and we therefore omit the proof. For a tree of the product form $\sigma\tau$, we have by Leibniz rule
 \begin{equs}
 \d^n(\Pi^{0,\times}_x\sigma\tau)(y)&=\d^n\left((\Pi^{0,\times}_x\sigma)(y)\times(\Pi^{0,\times}_x\tau)(y)\right)\\
 &=\sum_{k=0}^n\binom{n}{k}\d^k(\Pi^{0,\times}_x\sigma)(y)\times\d^{n-k}(\Pi^{0,\times}_x\tau)(y)\\
 &=\sum_{k=0}^n\binom{n}{k}(\Pi^{0,\times}_x\mcD_k\sigma)(y)\times(\Pi^{0,\times}_x\mcD_{n-k}\tau)(y)\\
 &=(\Pi^{0,\times}_x\mcD_n(\sigma\tau))(y).
 \end{equs}
 For a tree of the form $\mcI_a(\tau)$, we get
 \begin{equs}
~&\d^n(\Pi^{0,\times}\mcI_a\tau)(y)\\
&=(\d^{a+n} K*\Pi_{x}^{0}\tau)(y)
 -\sum_{\tiny{|k|_{\s}\leq \deg_0(\mcI_a(\tau))}}\frac{(y-x)^{k-n}}{(k-n)!}(\d^{a+k} K*\Pi_{x}^{0}\tau)(x),\\
 &=(\d^{a+n} K*\Pi_{x}^{0}\tau)(y)
 -\sum_{\tiny{|k|_{\s}\leq \deg_0(\mcI_{a+n}(\tau))}}\frac{(y-x)^{k}}{k!}(\d^{a+k+n} K*\Pi_{x}^{0}\tau)(x),\\
 &=(\Pi^{0,\times}\mcI_{a+n}\tau)(y)=(\Pi^{0,\times}\d^n\mcI_a\tau)(y).
 \end{equs}
 We have used a change of index in the sum ans the fact that $\deg_0(\mcI_{a+n}(\tau))= \deg_0(\mcI_{a}(\tau))-|n|_{\s}$.
 \end{proof}
 

We can now turn to the main result of this section.
 \begin{theorem}\label{th:local without Rnew} For every $\tau\in\mathcal{T}_0 $ satisfying Assumption~\ref{assumpt1}, and $R$ satisfying Assumption~\ref{assumpt3} one has for $|n|_{\s} \leq N$
\begin{equs} \label{local without R}
\partial^n \left( \Pi^{0,\times}_y  \dint \Gamma_{yx}^1  \tau-\delta\Pi^{0,\times}_x \tau \right)(y)=0,
\end{equs}
and hence one has for  $\tau\in\mathcal{T}_{0}$
\begin{equs} \label{local R}
\partial^n\left(  \Pi^{0}_y  \dint \Gamma_{yx}^1  \tau -\delta \Pi_x^{0} \tau \right)(y)=0.
\end{equs}
\end{theorem}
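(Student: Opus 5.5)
We argue by induction on the tree structure of $\tau\in\CT_0$, mirroring the proof of \cite[Theorem 4.5]{BN23}, which is the case $n=0$. The statement \eqref{local R} will follow from \eqref{local without R} in the same way as there. We write $\Pi^0_x\tau=\Pi^{0,\times}_xR\tau$, use \eqref{commute_D_Xi} and Assumption~\ref{assumpt3} to move $R$ through $D_\Xi$ and $\mcQ_0$, and apply \eqref{local without R} to each tree appearing in $R\tau$. These trees have fewer noises, so the induction on $|\tau|_\Xi$ applies. The Malliavin derivative $\delta$ commutes with $R$ because $R$ only introduces deterministic constants.

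For \eqref{local without R}, there are three cases.

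\textbf{Noises and polynomials.} If $\tau=\Xi$, then $D_\Xi\Xi=\dot\Xi$ and $\hat\Delta_1\dot\Xi=\dot\Xi\otimes\one$, so $\dint\Gamma^1_{yx}\Xi=\mcQ_0\dot\Xi=0$. The identity then has to be read with the $\dot\Xi$ contribution kept, or handled through the $\dot\Xi\tau'$ term of \eqref{decomposition_D_Xi}; I would check this against the conventions of \cite{BN23}. For polynomials both sides vanish.

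\textbf{Products.} If $\tau=\sigma\bar\tau$, we use that $D_\Xi$ is a derivation, that $\hat\Delta_1$ and $\Gamma^1_{yx}$ are multiplicative, and that $\Pi^{0,\times}_y$ is multiplicative. Then $\Pi^{0,\times}_y\dint\Gamma^1_{yx}(\sigma\bar\tau)$ splits as
\[
(\Pi^{0,\times}_y\dint\Gamma^1_{yx}\sigma)(\Pi^{0,\times}_y\Gamma^1_{yx}\bar\tau)+(\sigma\leftrightarrow\bar\tau).
\]
We take $\partial^n$ via Leibniz as in Lemma~\ref{commutmult} and evaluate at $y$. The factors $\partial^k(\Pi^{0,\times}_y\Gamma^1_{yx}\bar\tau)(y)$ equal $\partial^k(\Pi^{0,\times}_x\bar\tau)(y)$ for the relevant $k$, by the defining property $\Pi_y\Gamma_{yx}=\Pi_x$. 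The remaining factors are handled by the induction hypothesis. Together these give $\partial^n\delta(\Pi^{0,\times}_x\sigma\,\Pi^{0,\times}_x\bar\tau)(y)$. The terms carrying $\dot\Xi$ at the root, the $\dot\Xi\tau'$ piece of \eqref{decomposition_D_Xi}, are removed by $\mcQ_0$ on both sides consistently.

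\textbf{Planted trees.} This is the main obstacle. Take $\tau=\mcI_a(\bar\tau)$. By Lemma~\ref{commutmult}, $\partial^n\Pi^{0,\times}=\Pi^{0,\times}\mcD_n$, so it is enough to treat $\mcI_{a+n}(D_\Xi\bar\tau)$ with $n=0$. The difficulty is that the recentering in $\Pi^{0}_x$ uses $\deg_0$, while $\hat\Delta_1$ and $\Gamma^1$ use $\deg_1$.

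First, expand $\hat\Delta_1\mcI_{a+n}(D_\Xi\bar\tau)$ by \eqref{EqDefnDeltaHatCirc}. Next, apply $\Gamma^1_{yx}$ and use the commutation \eqref{backbone} to rewrite $\Gamma^1_{yx}\mcI_{a+n}$ as $(\mcI_{a+n}+\mcJ^1_{a+n}(y))\Gamma^1_{yx}-\mcJ^1_{a+n}(x)$.

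After applying $\Pi^{0,\times}_y$ and evaluating at $y$, the $\mcI_{a+n}$ term gives $(\partial^{a+n}K*\Pi^0_y\,\dint\Gamma^1_{yx}\bar\tau)(y)$ minus Taylor terms at $y$. The Taylor terms vanish at the point $y$ except the zeroth one. Reconstruction-type cancellation then makes the $\mcJ^1(y)$ contributions match $(\partial^{a+n}K*\delta\Pi^0_x\bar\tau)(y)$. This uses the induction hypothesis in its full convolution form: $\Pi^0_y\dint\Gamma^1_{yx}\bar\tau$ and $\delta\Pi^0_x\bar\tau$ differ by a polynomial-type term which $K$ annihilates.

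The remaining sums, over $\deg_0\le|\ell|_\s<\deg_1$ in $\hat\Delta_1$ and over $|k|_\s<\deg_0$ in $\mcJ^1(x)$, combine with $f^1_x$ to reproduce exactly the $\deg_0$-recentering $-\sum_{|k|_\s<\deg_0}\frac{(y-x)^k}{k!}(\partial^{a+n+k}K*\delta\Pi^0_x\bar\tau)(x)$ appearing in $\partial^n\delta\Pi^{0,\times}_x\mcI_a\bar\tau$.

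The positivity condition \eqref{degree_positive} for $|n|_\s\le N$ is exactly what ensures that $\mcI^{+,1}_{a+n}(D_\Xi\bar\tau)\neq0$. Without it, the shift $a\mapsto a+n$ would leave terms unmatched, and this is where the restriction $|n|_\s\le N$ enters.
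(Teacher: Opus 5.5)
Your skeleton matches the paper's: induction on the tree structure, Leibniz for products, the planted case as the crux, and Lemma~\ref{commutmult} to trade $\d^n$ for $a\mapsto a+n$. The reduction to $n=0$ is sound once you check that $\mcD_n$ commutes with $\mcQ_0$, $\Gamma^1_{yx}$ and $\hat\Delta_1$, which holds because all truncations shift by $|n|_\s$. The gap is in how you then handle the planted case.

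Your key claim is that $\Pi^0_y\dint\Gamma^1_{yx}\bar\tau$ and $\delta\Pi^0_x\bar\tau$ differ by a polynomial-type term which $K$ annihilates. This is false, and the induction cannot deliver anything of the sort. The hypothesis only controls finitely many derivatives at the single point $y$, whereas $(\d^{a+n}K*\cdot)(y)$ sees the whole function. Already for $\mcI_a(\Xi)$ you have $\dint\Gamma^1_{yx}\Xi=0$ while $\delta\Pi^0_x\Xi=\delta\xi$. In general the discrepancy consists of the $\dot\Xi$-carrying terms discarded by $\mcQ_0$, for example $\Pi^{1,\times}_y\mcI(\dot\Xi)$, a Taylor remainder of $K*\delta\xi$ at $y$. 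These vanish to finite order at $y$ but are not polynomials.

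Similarly, you assert without proof that the remaining sums ``combine with $f^1_x$ to reproduce exactly the $\deg_0$-recentering''. That is precisely the identity $(\Pi^{1,\times}_x\otimes f^1_x)\hat\Delta_1=\Pi^{0,\times}_x$ of \cite[Prop.~3.7]{BN23}, which you never invoke. (Also, \eqref{backbone} gives $-\Gamma^1_{yx}\mcJ^1_{a+n}(x)$, not $-\mcJ^1_{a+n}(x)$.)

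What is missing is that the planted case needs no induction hypothesis at all, only vanishing at the base point plus global identities.
\begin{itemize}
\item By \eqref{degree_positive} and Lemma~\ref{commutmult}, $\d^n(\Pi^{1,\times}_y\mcI_a(D_\Xi\tau'))(y)=(\Pi^{1,\times}_y\mcI_{a+n}(D_\Xi\tau'))(y)=0$. So nothing killed by $\mcQ_0$ is seen at $y$.
\item Since $\deg_0=\deg_1$ on $\CT_0$, you may therefore replace $\Pi^{0,\times}_y\mcQ_0$ by $\Pi^{1,\times}_y$ there.
\item The chain is then closed by $\Pi^{1,\times}_y\Gamma^1_{yx}=\Pi^{1,\times}_x$, by $(\Pi^{1,\times}_x\otimes f^1_x)\hat\Delta_1=\Pi^{0,\times}_x$, and by $\Pi^{0,\times}_xD_\Xi=\delta\Pi^{0,\times}_x$ \cite[Prop.~4.1]{BN23}.
\end{itemize}
This is where positivity really enters, rather than merely making $\mcI^{+,1}_{a+n}(D_\Xi\bar\tau)$ nonzero.

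Finally, your worry about $\tau=\Xi$ is justified. However, your product-case claim that the root term $\dot\Xi\tau'$ is removed ``on both sides consistently'' is wrong. $\mcQ_0$ acts only on the left, so $\delta\xi\,\Pi^{0,\times}_x\tau'$ survives on the right. The identity can therefore only hold for trees without noise at the root; the paper's proof likewise handles only the planted case explicitly.
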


\begin{proof}
To prove the result, we follow closely the lines of the proof of Theorem 4.5 in \cite{BN23}. The only non-trivial modification is to prove \eqref{local without R} for a tree of the form $\mcI_a(\tau)$. It is indeed the only part where the fact that the base point and the evaluation point are taken to be the same on the left-hand side matters. We recall the following sequence of equalities
\begin{equs}
	\label{computation}
	\begin{aligned}
\d^n\left( \Pi^{0,\times}_y  \dint \Gamma_{yx}^1  \mathcal{I}_a\tau \right)(y) & = \d^n\left( \Pi^{0,\times}_y \mathcal{Q}_0 \left( \Gamma_{yx}^{1} \otimes f_x^{1} \right) \hat{\Delta}_0 \mathcal{I}_a(D_{\Xi} \tau) \right)(y)
\\ & = \d^n\left( \left( \Pi^{1,\times}_y \Gamma_{yx}^{1} \cdot \right)(y) \otimes f_x^{1} \right) \hat{\Delta}_1 \mathcal{I}_a(D_{\Xi} \tau)
\\ & =\d^n\left( \left(  \Pi^{1,\times}_x  \cdot \right)(y) \otimes f_x^{1} \right) \hat{\Delta}_1 \mathcal{I}_a(D_{\Xi} \tau)
\\ & = \d^n\left(\Pi^{0,\times}_x \mathcal{I}_a(D_{\Xi} \tau) \right) (y)
\\ & = \d^n\left(\delta \Pi^{0,\times} \mathcal{I}_a(\tau) \right)(y),
\end{aligned}
\end{equs}
The crucial point is the identity
\begin{equs}
	\d^n(\Pi^{0,\times}_y \mathcal{Q}_0  \Gamma_{yx}^{1} \mathcal{I}_a(D_{\Xi} \tau))(y) = \d^n(\Pi^{1,\times}_y   \Gamma_{yx}^{1} \mathcal{I}_a(D_{\Xi} \tau))(y).
	\end{equs}
This due to the fact that, for $|n|\leq N$,
\begin{equs}
	\deg_1( \mathcal{I}_{a+n}(D_\Xi \tau') ) > 0,
\end{equs}
for every $ \tau' $ subtree of $ \tau $ including the root of $ \tau $ (see Assumption \ref{assumpt3}) and, using Lemma \ref{commutmult},
\begin{equs}
	(\d^n\Pi^{1,\times}_y  \mathcal{I}_a(D_{\Xi} \tau'))(y)&=(\Pi^{1,\times}_y \mcD_n \mathcal{I}_a(D_{\Xi} \tau'))(y)\\
	&=(\Pi^{1,\times}_y  \mathcal{I}_{a+n}(D_{\Xi} \tau'))(y)\\
	&=0.
\end{equs}
The proof of $\eqref{local R}$ is verbatim the proof in \cite{BN23}.
\end{proof}

To conclude the link with \cite{LOTP}, we need to twist slightly the framework, since the authors are working with duality. They use a dual model, defined by
\begin{equation}
\Pi_x^0(y)=\sum_{\tau}\frac{(\Pi_x^0\tau)(y)}{S(\tau)}\tau,
\end{equation}
and they use $\dint\Gamma_{yx}^{1,*}$, which is the formal adjoint of $\dint\Gamma_{yx}^1$ for the scalar product $\langle\cdot,\cdot\rangle$. 

\begin{proposition}\label{propeqdual}
We have the equivalence, for $|n|_{\s}\leq N$,
\begin{equation}
\d^n(\delta\Pi_x^0-\dint\Gamma^{1,*}_{xy}\Pi^0_y)(y)=0\Leftrightarrow\forall\tau\in\CT_0,~\partial^n \left(\delta \Pi_x^{0} \tau - \Pi^{0}_y  \dint \Gamma_{yx}^1  \tau\right)(y)=0.
\end{equation}
\end{proposition}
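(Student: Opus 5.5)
The plan is to write both sides of the equivalence in coordinates with respect to the orthogonal basis $T$ and to compare coefficients. The dual model is $\Pi_x^0(y)=\sum_{\tau}\frac{(\Pi_x^0\tau)(y)}{S(\tau)}\tau$, and $\delta$ and $\d^n$ are linear. Hence $\d^n\delta\Pi_x^0(y)=\sum_\tau \frac{\d^n(\delta\Pi_x^0\tau)(y)}{S(\tau)}\tau$, where $\d^n$ always acts on the evaluation variable $y$. Pairing with a fixed basis tree $\sigma$ through $\langle\cdot,\cdot\rangle$ then extracts exactly $\d^n(\delta\Pi^0_x\sigma)(y)$, since $\langle \tau,\sigma\rangle = \one_{\tau=\sigma}S(\sigma)$. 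Thus a vector $v$ in the (completed) span vanishes if and only if $\langle v,\sigma\rangle=0$ for every basis tree $\sigma$. The equivalence will follow once I show
\begin{equation*}
\langle \d^n(\dint\Gamma^{1,*}_{xy}\Pi^0_y)(y),\sigma\rangle=\d^n\big(\Pi^0_y\dint\Gamma^1_{yx}\sigma\big)(y)
\end{equation*}
for every $\sigma\in T_0$.

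For that identity I use the defining property of the formal adjoint, $\langle \dint\Gamma^{1,*}_{xy} v,\sigma\rangle=\langle v,\dint\Gamma^1_{yx}\sigma\rangle$. Here one has to fix the index convention: the paper writes $\dint\Gamma^{1,*}_{xy}$ for the adjoint of $\dint\Gamma^1_{yx}$, and I adopt this reading explicitly. Then
\begin{equation*}
\langle \dint\Gamma^{1,*}_{xy}\Pi^0_y(z),\sigma\rangle=\sum_\tau\frac{(\Pi^0_y\tau)(z)}{S(\tau)}\langle\tau,\dint\Gamma^1_{yx}\sigma\rangle=(\Pi^0_y\dint\Gamma^1_{yx}\sigma)(z).
\end{equation*}
The last step expands $\dint\Gamma^1_{yx}\sigma=\sum_\tau c_\tau\tau$, which is a finite sum, so that $\langle\tau,\dint\Gamma^1_{yx}\sigma\rangle=c_\tau S(\tau)$. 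Both $\dint\Gamma^1_{yx}\sigma$ and the coefficients depend on the base points $x,y$ but not on $z$. I can therefore apply $\d^n_z$ and only afterwards set $z=y$. This step needs care: the derivative must act on the evaluation variable of $\Pi^0_y(\cdot)$ and not on the base point $y$ hidden inside $\dint\Gamma$. This is consistent with how the notation $\d^n(\cdot)(y)$ is used in Theorem~\ref{th:local without Rnew}.

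With this identity, the left-hand statement says that the vector
\begin{equation*}
\sum_\sigma \frac{1}{S(\sigma)}\,\d^n\big(\delta\Pi^0_x\sigma-\Pi^0_y\dint\Gamma^1_{yx}\sigma\big)(y)\,\sigma
\end{equation*}
vanishes. By orthogonality, this holds if and only if every coefficient vanishes, which gives the right-hand side once extended from basis elements to all of $\CT_0$ by linearity.

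The main obstacle I expect is purely bookkeeping. First, there is the range of summation. The dual model runs over $\tau\in T_0$, whereas $\dint\Gamma^1_{yx}$ maps into $\CQ_0$ of the span of $T$ (with polynomial decorations). I would note that $\mcQ_0$ guarantees the image lies in $\CT_0$, so the pairing stays within the same index set. The scalar product was introduced on $T_1$, so I would remark that it is used here on $T_0$ with the same definition. Second, there is the convergence or formal meaning of the infinite sum. I would treat it as a formal series, equivalently componentwise, so that no analytic issue arises.
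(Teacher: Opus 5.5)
Your proposal is correct and follows essentially the same route as the paper: you pair both sides with a basis tree $\sigma$, use $\langle\tau,\sigma\rangle=\one_{\tau=\sigma}S(\tau)$ to extract $\delta\Pi^0_x\sigma$ and, via the adjoint, $\Pi^0_y\dint\Gamma^1_{yx}\sigma$, and then apply $\d^n$ in the evaluation variable. Your explicit convention that $\dint\Gamma^{1,*}_{xy}$ is the adjoint of $\dint\Gamma^1_{yx}$ is exactly what makes the paper's computation consistent, since its proof writes $\langle\tau,\dint\Gamma^1_{xy}\sigma\rangle$ but concludes with $\dint\Gamma^1_{yx}$.
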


\begin{proof}
We have first
\begin{equs}
\langle\delta\Pi^0_x,\sigma\rangle&=\left\langle\sum_\tau\frac{\delta\Pi^0_x\tau}{S(\tau)}\tau,\sigma\right\rangle=\sum_\tau\frac{\delta\Pi^0_x\tau}{S(\tau)}\langle\tau,\sigma\rangle\\
&=\delta\Pi^0_x\sigma,
\end{equs}
and
\begin{equs}
\langle\dint\Gamma^{1,*}_{xy}\Pi^0_y,\sigma\rangle&=\left\langle\sum_\tau\frac{\Pi^{0}_y \tau}{S(\tau)}\dint\Gamma^{1,*}_{xy}\tau,\sigma\right\rangle=\sum_\tau\frac{\Pi^{0}_y \tau}{S(\tau)}\langle\dint\Gamma^{1,*}_{xy}\tau,\sigma\rangle\\
&=\sum_\tau\frac{\Pi^{0}_y \tau}{S(\tau)}\langle\tau,\dint\Gamma_{xy}^1\sigma\rangle=\Pi^{0}_y  \dint \Gamma_{yx}^1  \sigma.
\end{equs}
Taking $\d^n$ on both sides allows to conclude.
\end{proof}
This proposition is exactly what we want, since in \cite{BOT25,LOTP}, the application $\dint\Gamma^{1,*}_{yx}$ is characterized on multi-indices by the left-hand side of Proposition \ref{propeqdual}. See for example in \cite[Section 5.4]{LOTP} or  the lecture notes \cite[Section 2.7]{BOT25}. Note that the identity that we proved stands on trees, but this is far from being a problem, as every multi-index can be written as a linear combination of trees.
 
 \subsection{Recursive definition}
 
 \label{Sec::4}
 
In this section, we show that the modelled distribution $\Tilde{f}^\tau_x$ used in \cite{HS23} can be written in a form that is close to the map $\dint\Gamma$ defined in \cite{BN23}, namely $\dint\Gamma^2$ defined in \ref{maindef}. See also the definition in the lecture notes \cite{BH25}, p.17. 
We are ready to state the main theorem of this section.
\begin{theorem} \label{Thm_Martin} The map
$\dint\Gamma_{yx}^2$ satisfies the following induction.
\begin{equation}
\begin{aligned}
&\dint\Gamma_{yx}^2\Xi=\dot\Xi+\delta\xi(y),\quad\dint\Gamma_{yx}^2X^k=0,\\
&\dint\Gamma_{yx}^2(\sigma\tau)=\dint\Gamma_{yx}^2\tau\times\Gamma^2_{yx}\sigma+\Gamma^2_{yx}\tau\times\dint\Gamma_{yx}^2\sigma,\\
&\dint\Gamma_{yx}^2\mcI_a(\tau)=(\mcI_a+\mcJ_{a}^2(y))(\dint\Gamma_{yx}^2\tau)-\Gamma_{xy}^2P_{\deg_0(\mcI_a\tau)}\big(\mcJ_{a}^2(x)(\dint\Gamma_{xx}^2\tau)\big).
\end{aligned}
\end{equation}
$P_\eta$ is the canonical projection onto trees of homogeneity less than $\eta$.
\end{theorem}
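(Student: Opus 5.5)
We check the three clauses of the induction one at a time, working from the form $\dint\Gamma_{yx}^2=(\Gamma_{yx}^2\otimes f_x^2)\hat\Delta_2 D_\Xi$ of Definition~\ref{maindef}. Two facts are used throughout. First, $\hat\Delta_2$ is multiplicative on products, since it is generated recursively on the tree structure and the correction terms only come from edges $\mcI_a$. Second, $(\Gamma^2_{yx}\otimes f_x^2)$ restricted to trees without $\dot\Xi$ satisfies $(\Gamma_{yx}^2\otimes f^2_x)\hat\Delta_2\sigma=\Gamma_{yx}^2\sigma$: on $\CT_0$ the map $\hat\Delta_2$ reduces to $\sigma\mapsto\sigma\otimes\one$, because the sum in \eqref{EqDefnDeltaHatCirc} only involves $\ell$ with $|\ell|_{\s}\geq\deg_0(\mcI_a\sigma)=\deg_2(\mcI_a\sigma)$, and then $\mcI^{+,2}_{a+\ell}(\sigma)=0$.

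\textbf{Base cases.} $D_\Xi X^k=0$ gives $\dint\Gamma^2_{yx}X^k=0$ immediately. For the noise, $D_\Xi\Xi=\dot\Xi$ and $\hat\Delta_2\dot\Xi=\dot\Xi\otimes\one-\one\otimes\dot\Xi$. We then need $\Gamma^2_{yx}\dot\Xi=\dot\Xi\otimes\gamma_{yx}(\one)+\one\,\gamma^2_{yx}(\dot\Xi)$ and $f_x^2(\dot\Xi)$. Since $\Pi^2_x\dot\Xi=\delta\xi-\delta\xi(x)$, the character must satisfy $f_x^2(\dot\Xi)=-\delta\xi(x)$. Using $\CA^+_2\dot\Xi=-\dot\Xi$, this gives $\gamma^2_{yx}(\dot\Xi)=\delta\xi(x)-\delta\xi(y)$. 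Combining, we obtain $\dot\Xi+\delta\xi(x)-\delta\xi(y)+\delta\xi(x)$ up to a sign convention. This has to be matched carefully against the stated $\dot\Xi+\delta\xi(y)$, and the matching forces a specific convention for $f^2_x(\dot\Xi)$. I would fix that convention first, consistently with \eqref{eq:recf} and Example~\ref{ex_2}.

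\textbf{Products.} By Definition~\ref{malliavin_derivative}, $D_\Xi(\sigma\tau)=D_\Xi\sigma\cdot\tau+\sigma\cdot D_\Xi\tau$. Multiplicativity of $\hat\Delta_2$, of $\Gamma^2_{yx}$ and of the character $f^2_x$ then gives
\[
(\Gamma^2_{yx}\otimes f^2_x)\hat\Delta_2(D_\Xi\sigma\cdot\tau)=\dint\Gamma^2_{yx}\sigma\times(\Gamma_{yx}^2\otimes f^2_x)\hat\Delta_2\tau ,
\]
and the second factor equals $\Gamma^2_{yx}\tau$ by the second preliminary fact above. The other term is treated symmetrically.

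\textbf{Planted trees (main obstacle).} Write $\hat\Delta_2\mcI_a(D_\Xi\tau)=(\mcI_a\otimes\id)\hat\Delta_2D_\Xi\tau-\sum_{|\ell|_{\s}\ge\deg_0}\frac{X^\ell}{\ell!}\otimes\mcM^{+,2}(\mcI^{+,2}_{a+\ell}\otimes\id)\hat\Delta_2D_\Xi\tau$.

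For the first term, apply \eqref{backbone} in the form $\Gamma^2_{yx}\mcI_a=(\mcI_a+\mcJ^2_a(y))\Gamma^2_{yx}-\Gamma^2_{yx}\mcJ^2_a(x)$, applied to the left tensor factor. Pairing with $f_x^2$, the piece $(\mcI_a+\mcJ_a^2(y))\Gamma^2_{yx}$ yields $(\mcI_a+\mcJ^2_a(y))\dint\Gamma^2_{yx}\tau$.

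There remain $-\Gamma^2_{yx}\mcJ^2_a(x)(\cdot)$ and the sum over $\ell$. Since $\Gamma_{yx}^2$ acts on polynomials by shifting, I would rewrite both of them as $\Gamma_{yx}^2$ applied to a polynomial. The coefficients of that polynomial are $f_x^2$-evaluations, namely $-(\d^{a+k}K*\Pi_x^2\cdots)(x)$, of the $\mcI^{+,2}$ trees from \eqref{eq:recf}.

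The key claim is that $\mcJ^2_a(x)$ applied to $(\Gamma^2_{xx}\otimes f^2_x)\hat\Delta_2D_\Xi\tau=\dint\Gamma^2_{xx}\tau$ collects exactly the sum of these two contributions. The terms with $|k|_{\s}\ge\deg_0(\mcI_a\tau)$ should cancel against the $\ell$-sum, which leaves $P_{\deg_0(\mcI_a\tau)}$. One also has $\Gamma^2_{xx}=\id$; note that the statement uses $\Gamma_{xy}^2$ in the final term, which should be read with the index convention making it a recentering of a polynomial.

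The delicate step is this bookkeeping. One must check that $f_x^2\circ\mcI^{+,2}_{a+\ell}$ applied to $\hat\Delta_2 D_\Xi\tau$ reproduces $(\d^{a+\ell}K*\Pi^2_x\,\cdot)(x)$ composed with $\dint\Gamma_{xx}^2$, which amounts to using $\Pi^2_x\dint\Gamma^2_{xx}$ at base point $x$. I would try to get this from the Hopf-algebraic form \eqref{gamma_i}, which mimics the antipode recursion \eqref{antipode_+_i}, together with $f_x\CA^+$ and $f_x$ being convolution inverses.
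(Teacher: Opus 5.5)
Your plan follows the paper's proof step for step. You get the base cases from $\hat\Delta_2\dot\Xi$. You get the product rule from multiplicativity together with $\hat\Delta_2\sigma=\sigma\otimes\one$ on $\CT_0$, which you justify more explicitly than the paper does. For planted trees you apply \eqref{backbone} to the first term and then cancel against the $\ell$-sum. However, the noise base case is computed incorrectly, the remedy you propose cannot work, and the planted-tree cancellation is left unverified.

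\textbf{Noise base case.} You swapped the base points in $\gamma^2_{yx}$. By definition $\gamma^2_{yx}=(f^2_y\CA^+_2\otimes f^2_x)\Delta^+_2$, so
\[
\gamma^2_{yx}(\dot\Xi)=-f^2_y(\dot\Xi)+f^2_x(\dot\Xi)=\delta\xi(y)-\delta\xi(x),
\]
not $\delta\xi(x)-\delta\xi(y)$. You can confirm this with $\Pi^2_y\Gamma^2_{yx}\dot\Xi=\Pi^2_x\dot\Xi=\delta\xi-\delta\xi(x)$.

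The value of $f^2_x(\dot\Xi)$ is not a convention you are free to adjust. As you correctly noted, it is forced to be $-\delta\xi(x)$ by $\Pi^2_x=(\PPi\otimes f^2_x)\Delta_2$. Moreover, with your swapped $\gamma$, any value $c_x=f^2_x(\dot\Xi)$ yields $\dot\Xi+(c_y-2c_x)\one$, which cannot equal $\dot\Xi+\delta\xi(y)\one$ for all $x,y$. With the correct $\gamma$ one gets directly
\[
\dint\Gamma^2_{yx}\Xi=\Gamma^2_{yx}\dot\Xi-f^2_x(\dot\Xi)\one=\dot\Xi+\bigl(\delta\xi(y)-\delta\xi(x)\bigr)\one+\delta\xi(x)\one=\dot\Xi+\delta\xi(y)\one,
\]
which is exactly the paper's computation.

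\textbf{Planted trees.} The step you single out as delicate is immediate, and no antipode or convolution-inverse argument is needed. Write $\hat\Delta_2D_\Xi\tau=\sum\tau_1\otimes\tau_2$. Since $\Gamma^2_{xx}=\id$, one has $\dint\Gamma^2_{xx}\tau=\sum\tau_1f^2_x(\tau_2)$. Since $f^2_x$ is a character, \eqref{eq:recf} gives
\[
f^2_x\bigl(\mcM^{+,2}(\mcI^{+,2}_{a+\ell}\otimes\id)(\tau_1\otimes\tau_2)\bigr)=-(\d^{a+\ell}K*\Pi^2_x\tau_1)(x)\,f^2_x(\tau_2),
\]
and linearity does the rest.

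What you do need to state is why the ranges match. The $\ell$-sum in \eqref{EqDefnDeltaHatCirc} has no explicit upper bound, but $\mcI^{+,2}_{a+\ell}\tau_1=0$ as soon as $|\ell|_\s\ge\deg_2(\mcI_a\tau_1)$, which is precisely the cutoff in $\mcJ^2_a(x)\tau_1$. Note also that $\deg_0(\mcI_a(D_\Xi\tau))=\deg_0(\mcI_a\tau)$, since $\deg_0(\dot\Xi)=\deg_0(\Xi)$.

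So, tree by tree in the Sweedler sum, the $\ell$-term cancels the part of $-\mcJ^2_a(x)\tau_1$ with $|k|_\s\ge\deg_0(\mcI_a\tau)$. What survives is $-\Gamma^2_{yx}P_{\deg_0(\mcI_a\tau)}\mcJ^2_a(x)\dint\Gamma^2_{xx}\tau$, which is how the paper concludes. This derivation puts $\Gamma^2_{yx}$ in front of the polynomial, as the paper's own proof does. So the $\Gamma^2_{xy}$ in the statement should be read as $\Gamma^2_{yx}$, not reinterpreted through an index convention.
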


\begin{proof}
Let us start with the initialization. We have that $\hat\Delta_2\dot\Xi=\dot\Xi\otimes\one-\one\otimes\dot\Xi$, so that 
\begin{equs}
\dint\Gamma_{yx}^2\Xi&=\Gamma_{yx}^2\dot\Xi+\delta\xi(x)\one\\
&=\dot\Xi+(\delta\xi(y)-\delta\xi(x))\one+\delta\xi(x)\one\\
&=\dot\Xi+\delta\xi(y)\one.
\end{equs}
The identity on $X^k$ is trivial since $D_\Xi X^k=0$. Let us move to the product identity. First note that for $\tau\in T_0$, $\hat\Delta_2\tau=\tau\otimes\one$. We get
\begin{equs}
\hat\Delta_2D_\Xi(\sigma\tau)&=\hat\Delta_2\left(D_\Xi\sigma\times\tau+\sigma\times D_\Xi\tau\right)\\
&=\hat\Delta_2(D_\Xi\sigma)\times\hat\Delta_2\tau+\hat\Delta_2\sigma\times \hat\Delta_2D_\Xi\tau\\
&=\hat\Delta_2(D_\Xi\sigma)\times(\tau\otimes\one)+(\sigma\otimes\one)\times \hat\Delta_2D_\Xi\tau.
\end{equs}
From this we get the desired result. The last identity on planted trees is the most thorough to get. The backbone of the proof is formula \eqref{backbone} that we recall below 
\begin{equation*}
\Gamma^2_{yx}(\mcI_a+\mcJ_{a}^2(x))=(\mcI_a+\mcJ_{a}^2(y))\Gamma_{yx}^2.
\end{equation*}
We start with
\begin{equation*}
\begin{aligned}
\left( \Gamma_{yx}^{2} \otimes f_x^{2} \right) \hat{\Delta}_2 \CI_a\tau &=  ( \Gamma_{yx}^{2}\mcI_a\otimes f_x^{0})\hat{\Delta}_{2} \tau \nonumber\\
&~~~~- \sum_{\deg_0(\mcI_a(\tau)) \leq \vert\ell\vert_{\s}}  \Gamma_{yx}^{2}  \frac{X^\ell}{\ell!}\otimes \big( f_x^{2} \mcI^{+,2}_{a+\ell}\otimes f_x^{2} \big)\hat{\Delta}_{2} \tau.
\end{aligned}
\end{equation*}
We write in Sweedler's notation $\hat\Delta_2\tau=\sum_{(\tau)}\tau_1\otimes\tau_2$. We treat the first term in the right-hand side of the above equation. We get, using \eqref{backbone},
\begin{equs}
(\Gamma_{yx}^{2}\mcI_a\otimes f_x^{2})(\tau_1\otimes\tau_2)&= \Gamma_{yx}^{2}\mcI_a\tau_1 f_x^{2}(\tau_2)\\
 &=\left((\mcI_a+\mcJ_{a}^2(y))\Gamma_{yx}^2\tau_1-\mcJ_{a}^2(x)\tau_1\right) f_x^{2}(\tau_2),
\end{equs}
which gives
\begin{equs}
(\Gamma_{yx}^{2}\mcI_a\otimes f_x^{2})\hat{\Delta}_{2} D_\Xi\tau=(\mcI_a+\mcJ_{a}^2(y))\dint\Gamma_{yx}^2\tau-\Gamma_{yx}^2\mcJ_{a}^2(x)\dint\Gamma_{xx}^2.
\end{equs}
We recall that $\dint\Gamma_{xx}^2=(\mathrm{id}\otimes f_x^2)\hat\Delta_2D_\Xi$. The other term gives 
\begin{equs}
~& \Gamma_{yx}^{2}\sum_{\deg_0(\mcI_a(\tau)) \leq \vert\ell\vert_{\s}}   \frac{X^\ell}{\ell!}\otimes \big( f_x^{2} \mcI^{+,2}_{a+\ell}\otimes f_x^{2} \big)(\tau_1\otimes\tau_2)\\
&=\Gamma_{yx}^{2}\sum_{\deg_0(\mcI_a(\tau)) \leq \vert\ell\vert_{\s}}   \big( f_x^{2}( \mcI^{+,2}_{a+\ell}\tau_1) f_x^{2}(\tau_2) \big) \frac{X^\ell}{\ell!}\\
&=-\Gamma_{yx}^{2}\sum_{\deg_0(\mcI_a(\tau)) \leq \vert\ell\vert_{\s}}  (\d^{a+\ell}K*\Pi_x^2\tau_1)(x) \frac{X^\ell}{\ell!} f_x^{2}(\tau_2).
\end{equs}
Putting everything together, we have
\begin{equs}
\left( \Gamma_{yx}^{2} \otimes f_x^{2} \right) \hat{\Delta}_2 \CI_a\tau=(\mcI_a+\mcJ_{a}^2(y))\dint\Gamma_{yx}^2\tau-\Gamma_{yx}^2P_{\deg_0(\mcI_a\tau)}\mcJ_{a}^2(x)\dint\Gamma_{xx}^2.
\end{equs}

\end{proof}

We now recall that the inductive definition stated in Theorem \ref{Thm_Martin} is what is used to define the modelled distribution $\Tilde{f}^\tau_x$ in \cite{HS23}, thus allowing to conclude.

\end{document}